\documentclass{article}
\usepackage[title]{appendix}
\usepackage{epsfig, amsmath, amssymb, amsthm}
\usepackage{color}
\usepackage{esint}

\usepackage{tikz}
\usetikzlibrary{trees}
\usepackage{tkz-euclide}

\newcommand{\field}[1]{\mathbb{#1}}

\newcommand{\N}{\field{N}}    
\newcommand{\R}{\field{R}}    
\newcommand{\E}{\field{E}}      
\newcommand{\fP}{\field{P}}     
\newcommand{\T}{\field{T}}      

\newcommand{\cB}{{\mathcal B}}  
\newcommand{\cD}{{\mathcal D}}  
\newcommand{\cE}{{\mathcal E}}  
\newcommand{\cF}{{\mathcal F}}  
\newcommand{\cG}{{\mathcal G}}  
\newcommand{\cH}{{\mathcal H}}  
\newcommand{\cK}{{\mathcal K}}  
\newcommand{\cL}{{\mathcal L}}  
\newcommand{\cN}{{\mathcal N}}  
\newcommand{\cO}{{\mathcal O}}  

\newcommand{\bA}{\mathbf{A}}

\newcommand{\1}{\mathbf{1}}  

\newcommand{\eps}{\varepsilon}
\def\be{\begin{equation}}
  \def\ee{\end{equation}}
\def\lbl{\label}
\def\p{\partial}

\newcommand{\conE}[2]{\E\left(\left.#1\right| #2\right)}  

\DeclareMathOperator{\esssup}{ess\,sup}

\newtheorem{theorem}{Theorem}[section]
\newtheorem{proposition}[theorem]{Proposition}
\newtheorem{lemma}[theorem]{Lemma}
\newtheorem{cor}[theorem]{Corollary}
\newtheorem{remark}[theorem]{Remark}

\newtheorem{assumption}[theorem]{Assumption}

\title{From PDEs on standard domains to self-similar particle systems on fractals}
\date{\today}
\author{
  Georgi S. Medvedev\thanks{Department of Mathematics, Drexel University,
	 Philadelphia, PA 19104, USA, 
		{\tt medvedev@drexel.edu}}
              \and  Emmanuel Tr{\'e}lat \thanks{Sorbonne Universit\'e, Universit\'e Paris Cit\'e, CNRS, Inria, Laboratoire Jacques-Louis Lions, LJLL, F-75005 Paris, France,
               	{\tt emmanuel.trelat@sorbonne-universite.fr}
              }
              }

\date{}

\begin{document}
\maketitle

\begin{abstract}
We construct transported PDEs on self-similar fractal domains
from reference equations posed on the unit interval, and derive
explicit self-similar interacting particle systems that approximate
the resulting dynamics. The construction combines a
measure-preserving isometry between $L^2$-spaces on
$[0,1]$ and on the fractal \cite{Med2026}, a nonlocal-to-local
approximation of differential operators \cite{PauTre2025}, and a
Galerkin discretization on the canonical self-similar partitions.
This yields a two-parameter approximation scheme whose error
separates a nonlocal consistency term from a Galerkin network term.
We work out the transport, Burgers, and heat equations, discuss the
relation with intrinsic operators on fractals, and outline extensions
to local charts and to pullbacks of nonlocal equations on fractal
domains.

Moreover, the reverse mapping transforms  a nonlocal evolution
equation on a fractal domains into the evolution equation on the unit
interval, where the methods of classical numerical analysis can be
applied. This suggests a promising direction for the
development of numerical methods for nonlocal models on fractals,
including fractional heat equation, fractal scattering, and related models.
\end{abstract}

\section{Introduction}\label{sec.intro}

Partial differential equations on fractal domains arise naturally in models of transport,
diffusion, and wave propagation in geometrically irregular media. Classical examples
include diffusion in porous or ramified materials, signal propagation on hierarchical
structures, and scattering by fractal interfaces. The difficulty is that a fractal set
typically has no smooth atlas in the usual sense, so differential operators cannot be
introduced by direct geometric differentiation.

A large part of the existing theory therefore proceeds intrinsically, either through
Dirichlet forms and diffusion processes, or through renormalized graph approximations of
the Laplacian and related operators. For post-critically finite self-similar sets, this
viewpoint leads to Kigami's Laplacian and to a rich analytic theory of elliptic,
parabolic, and variational problems on fractals; see, for instance,
\cite{FukShi92,Kig01,Str06,HinTep13,HinMei20,Mosco13,CHT18}. Our objective here is
different. We do not attempt to recover an intrinsic differential calculus on the
fractal. Instead, we construct \emph{transported} PDEs on a self-similar set $K$ from
a reference PDE posed on a simpler domain $Q$, and we derive from this
construction explicit finite-dimensional interacting particle systems (IPS) adapted to the
self-similar structure of $K$.

The basic mechanism is simple. Recent work of \cite{Med2026} provides an
explicit measure-preserving isomorphism between a self-similar set $K$ endowed with a
stationary self-similar measure $\nu$ and an isomorphic iterated function system on the
unit interval $Q=[0,1]$, endowed with the corresponding stationary measure $\lambda$.
In the uniform case relevant for the concrete PDE examples below, $\lambda$ is simply
the Lebesgue measure on $[0,1]$. This isomorphism induces a linear isometry
\[
  T:L^2(K,\nu)\longrightarrow L^2(Q,\lambda),
\]
together with its inverse $T^{-1}$. Therefore, if $\bA$ is the generator of a
well-posed evolution equation on $Q$, then the conjugated operator
$T^{-1}\bA T$ defines a corresponding evolution on $K$.

\begin{figure}
 	\centering
 \includegraphics[width =.45\textwidth]{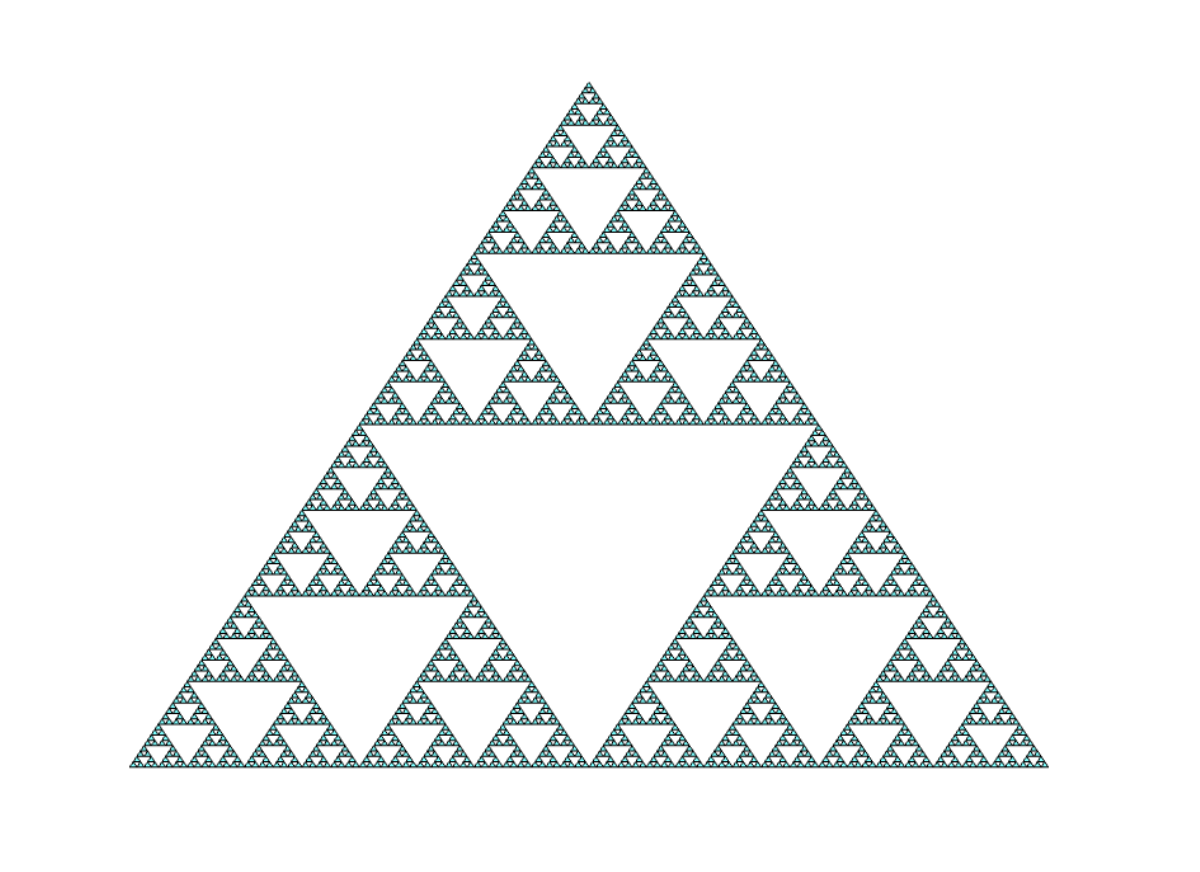}
  \caption{The Sierpinski gasket, which serves as a prototype throughout the paper.}
  \label{f.SG}
 \end{figure}

This operator-theoretic lifting, however, is only the first part of the story. In order
to connect the transported PDE to explicit particle systems, we insert an intermediate
nonlocal regularization step, following the nonlocal-to-local approximation framework of
\cite{PauTre2025}. Thus, starting from a PDE on $Q$, we first approximate its generator
by a family of integral operators $\bA_\eps$, then transport these operators to $K$, and
finally discretize the lifted nonlocal equation on the canonical level-$m$ partition of
the self-similar set. The resulting ODE system is a self-similar interacting particle
system whose nodes are the cells $K_w$ of generation $m$.

The paper has three main objectives. First, we formulate an abstract two-scale framework
that combines the nonlocal-to-local approximation on $Q$ with the Galerkin/IPS
approximation on $K$. Second, we turn this framework into explicit constructions for the
transport, Burgers, and heat equations, obtaining concrete self-similar particle systems
and quantitative error estimates. Third, we clarify the scope of the construction: the
operators transported to $K$ are not, in general, intrinsic operators in the sense of
fractal analysis, but they are natural continuum limits of hierarchical networks built
directly from the underlying iterated function system (IFS).

This distinction between transported and intrinsic operators is conceptually important.
If one aims at the canonical Laplacian attached to the geometry of the fractal, then one
must retain the adjacency structure of prefractal graphs and the renormalized Dirichlet
form. The global coding map used here preserves measure and symbolic cylinders, but not
the adjacency relations between cells in Euclidean space. On the other hand, if the aim
is to model dynamics on self-similar networks, or to import a well-understood PDE from a
simple reference domain to a hierarchical one, then the present construction gives an
explicit and flexible alternative. We return to this issue in the final section, where
we also discuss two possible continuations of the present work: local charts on the
Sierpinski gasket, and the pullback of nonlocal fractal equations (for
instance fractional powers of an intrinsic Laplacian) to the reference interval.

Let us also emphasize the relation with the intrinsic first-order calculus on fractals.
The Dirichlet-form approach of \cite{CipSau03} and the subsequent vector-analysis
framework of \cite{HinTep13,HinRocTep13} construct gradients,
divergences and quasilinear PDEs from energy measures and resistance forms. 
Recent works on Burgers-type equations,
transport/continuity equations, graph approximations and hydrodynamic limits on
fractals further show that first-order and particle-system questions can be developed
intrinsically \cite{HinMei20,HinSche24,HinMei22,CHT18}. The present paper 
is complementary to that line of work: our operators are transported through a
symbolic measure isomorphism and are designed to yield explicit self-similar
Galerkin/IPS approximations, whereas the intrinsic theory is governed by energy,
harmonic coordinates, and the adjacency structure of prefractal graphs.

The paper is organized as follows. Section~\ref{sec.fractals} recalls the self-similar
and measure-theoretic background. Section~\ref{sec.isomorph} reviews the canonical
measure-preserving isomorphism between the fractal and the unit interval and the induced
isometries on function spaces. Section~\ref{sec.evolution} summarizes the abstract
nonlocal evolution problem on a self-similar set and its deterministic and random
Galerkin approximations. Section~\ref{sec.prop} contains the general lifting framework
from PDEs on $Q$ to nonlocal equations and self-similar IPS on $K$. Section~\ref{sec.examples}
is devoted to the transport, Burgers, and heat equations. Finally,
Section~\ref{sec.outlook} discusses extensions, limitations, and possible bridges with
intrinsic operators and native nonlocal equations on fractals.

\section{Self-similar sets, symbolic coding, and self-similar measures}\label{sec.fractals}
In this section, we collect the measure-theoretic background on self-similar sets and stationary self-similar measures that will be used throughout the paper. Standard references are \cite{BSS-self-similar,BP-Fractals,Hut81}.

\paragraph{Iterated function systems and attractors.}
Let $\mathcal{F}=\{f_i:\mathbb{R}^d \to \mathbb{R}^d \mid i\in [k]\}$
be a finite collection of \textit{similitudes}
\begin{equation*}
  |f_i(x)-f_i(y)| = r_i |x-y| 
  \quad \text{for all } x,y\in\mathbb{R}^d,\; i\in[k],
\end{equation*}
with $0<r_i<1$.
A unique compact set $K$ satisfying
$K = \bigcup_{i\in[k]} f_i(K)$
is called the \emph{attractor} of the IFS $\mathcal{F}=\{f_i\}_{i\in [k]}$ 
(cf.~\cite[Theorem~2.1.1]{BP-Fractals}). $K$ is also called a \emph{self-similar} set.

\paragraph{Symbolic space and Bernoulli measures.}
There is a natural family of self-similar measures associated with $K$. These measures are
most conveniently described through Bernoulli measures on the symbolic space
$\Sigma\doteq [k]^\N$ which we review first.

For each $n\ge1$, let $\Sigma_n$ be the set of words of length $n$,
and let $\Sigma^\ast=\bigcup_{n=0}^\infty\Sigma_n$
denote the collection of all finite words over $[k]$.

We equip $\Sigma$ with the metric $\rho(i,j)=k^{-|i\wedge j|}$, for $i=(i_1,i_2,\dots)$ and $j=(j_1,j_2,\dots)$,
where $i\wedge j$ stands for the common prefix of the two sequences $i$ and $j$, and
$|i\wedge j|$ denotes its
length. Thus, $|i\wedge j|=\sup\{l\in\N:\, i_l=j_l\}$ if $i\wedge j\neq\emptyset$ and $0$ otherwise.
This metric defines a topology on $\Sigma$ that is consistent with the product
topology. $(\Sigma,\rho)$ is a compact metric space (cf.~\cite{BSS-self-similar}).

Let $i_1,\dots,i_n\in[k]$ and  define the cylinders
\begin{equation*}
[i_1 i_2 \dots i_n] \doteq \{j=(j_1j_2\dots) \in\Sigma:\; j_l=i_l,\; 1\le l\le n\}.
\end{equation*}
The collection of all cylinders generates the Borel $\sigma$-algebra in $\Sigma$, $\cB(\Sigma)$.

We say that $p=(p_1,p_2,\dots,p_k)$ is \textit{a probability vector}, if
$p_i>0$ for all $i\in[k]$ and $\sum_{i=1}^k p_i=1$.
The Bernoulli measure corresponding to $p$ is first defined on cylinders 
$\mu_p([i_1 i_2 \dots i_n])=p_{i_1}p_{i_2}\cdots p_{i_n}$,
and is then extended uniquely to a Borel probability measure on $(\Sigma,\cB(\Sigma))$.

\paragraph{Natural projection and self-similar measures.}
The symbolic representation of $K$ is provided by the natural projection $\pi_K:\Sigma\to K$.

For $w=(w_1\ldots w_n)\in\Sigma_n$, we set
$f_w\doteq f_{w_1}\circ\cdots\circ f_{w_n}$ and $K_w\doteq f_w(K)$.
The natural projection is defined by
\begin{equation*}\label{def-pi}
  \pi_K(w)=\bigcap_{n\ge1} K_{w_1\ldots w_n}, \qquad w=(w_1w_2\dots)\in\Sigma.
\end{equation*}
Since $\{K_{w_1\ldots w_n}\}_{n\ge1}$ is a nested sequence of compact sets with diameters tending to zero,
their intersection consists of a single point. The map $\pi_K$ is continuous and surjective
(cf.~\cite[Theorem~1.2.3]{Kig01}). However, it may not be injective, i.e.,
\begin{equation*}
  \cN_{\pi_K}=\left\{w\in\Sigma:\; \operatorname{card}\left\{\pi_K^{-1}(\pi_K(w))\right\}>1\right\}.
\end{equation*}
may not be empty. Indeed, this happens for many interesting examples of self-similar sets such as SG.

Throughout this paper, we make the following standing assumption.
\begin{assumption}\label{as.small}
There holds $\mu_p(\cN_{\pi_K})=0$.
\end{assumption}

We are now ready to introduce the stationary self-similar measure
$\nu_p=(\pi_K)_\ast\mu_p$:
\begin{equation*}
\nu_p(A)=\mu_p(\pi_K^{-1}(A)), \; A\in\cB(K).
\end{equation*}
The pushforward measure $\nu_p=(\pi_K)_\ast\mu_p$ is \textit{the stationary measure} of
the probabilistic IFS $(K,\cF, \nu_p)$, which means that
\begin{equation*}
\nu_p(A)=\sum_{j=1}^k p_j\,\nu_p\bigl(f_j^{-1}(A)\bigr),
\qquad A\in\cB(K).
\end{equation*}
The stationary measure $\nu_p$ is unique \cite[Theorem~2.1.1]{BP-Fractals}.
It is called \emph{a self-similar measure}.

Under Assumption~\ref{as.small}, we have the following explicit formula for the
pushforward measure $\nu_p$ on the cylinders:
\begin{equation}\label{nu-cyl}
\nu_p(K_w)=p_{w_1}p_{w_2}\dots p_{w_n},
\qquad \forall w\in\Sigma_n,\; n\in\N.
\end{equation}

\section{Measure-preserving isomorphism with the unit interval}\label{sec.isomorph}
\setcounter{equation}{0}

The key tool for transporting PDEs from $Q$ to $K$ is the measure-preserving isomorphism between the function spaces $L^2(Q,\lambda_p)$ and $L^2(K,\nu_p)$. In this section we recall the construction from \cite{Med2026} and the induced transport of functions and kernels.

\subsection{The model IFS on the unit interval}\label{sec.iso-IFS}
Given a probabilistic IFS $(K, \{f_i\}_{i\in [k]}, \nu_p)$, we construct an isomorphic IFS on the unit interval $Q=[0,1]$, denoted $(Q, \{g_i\}_{i\in [k]}, \lambda_p)$.

To this end, note that
$Q$ is the attractor of the following IFS
$$
Q=\bigcup_{i=1}^k g_i(Q),
$$
where $g_i(x)=\frac{x}{k} + x_{i-1}$ for $i\in [k]$,
and $x_i=\frac{i}{k}$. Denote $\cG=\{g_i\}_{i\in [k]}$.

We denote the natural projection ${\pi_Q}:\Sigma\to Q$:
\begin{equation*}
\Sigma\ni w=(w_1w_2\dots) \mapsto \bigcap_{j=1}^\infty g_{w_1w_2\dots w_j} (Q).
\end{equation*}
Let $\lambda_p=({\pi_Q})_\ast\mu_p$ be the pushforward of the Bernoulli
measure. On cylinders $Q_{w_1w_2\dots w_n}=g_{w_1w_2\dots w_n}(Q)$, we have
\begin{equation*}
  \lambda_p(Q_{w_1w_2\dots w_n})=
  \mu_p\big( {\pi_Q}^{-1} (Q_{w_1w_2\dots w_n}) \big) =p_{w_1}p_{w_2}\dots p_{w_n}.
\end{equation*}
In particular, for the uniform $p=(k^{-1}, k^{-1},\dots, k^{-1})$,
$\lambda_p$ coincides with the Lebesgue measure.


For $(Q,\cG, \lambda_p)$, one can easily verify
\begin{equation}\label{NpiQ}
  \mu_p\left(\mathcal{N}_{\pi_Q}\right)=0.
  \end{equation}

\begin{theorem}\label{thm.isomorph}
Under Assumption~\ref{as.small}, there exist full-measure subsets $Q_0\subset Q$ 
and $K_0\subset K$ and a measure-preserving bijection $\Phi:Q_0\to K_0$ 
such that $  \Phi\circ g_i = f_i\circ \Phi$ on $Q_0\cap g_i^{-1}(Q_0)$, for $i\in[k]$.
Equivalently, the probabilistic IFS $(K,\mathcal F,\nu_p)$ and $(Q,\mathcal G,\lambda_p)$ are isomorphic.
\end{theorem}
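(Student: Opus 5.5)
The plan is to build $\Phi$ through the common symbolic space $\Sigma$, as $\Phi=\pi_K\circ\pi_Q^{-1}$ restricted to suitable full-measure sets. Set $N=\cN_{\pi_K}\cup\cN_{\pi_Q}$. By Assumption~\ref{as.small} and \eqref{NpiQ}, $\mu_p(N)=0$. To make the conjugacy work on all of the domain at once, I will replace $N$ by its saturation under the shift and its inverse branches: let $\sigma$ be the left shift and $\tau_i(w)=iw$. Then
\[
\widetilde N=\bigcup_{n\ge0}\sigma^{-n}\Bigl(\bigcup_{m\ge 0}\sigma^{m}(N)\Bigr).
\]
Each $\sigma^m(N)=\bigcup_{|v|=m}\{w:\ vw\in N\}$ is $\mu_p$-null, because $\mu_p(\{w: vw\in N\})=\mu_p(N\cap[v])/p_v=0$. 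Each $\sigma^{-n}$ preserves null sets, since $\mu_p$ is shift-invariant (Bernoulli). Hence $\widetilde N$ is a countable union of null sets and is null. Put $\Sigma_0=\Sigma\setminus\widetilde N$. This set avoids $N$ and is invariant under $\sigma$ and under every $\tau_i$.

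Next define $Q_0=\pi_Q(\Sigma_0)$ and $K_0=\pi_K(\Sigma_0)$. The first point is that $\pi_Q$ is injective on $\Sigma_0$, and moreover $\pi_Q^{-1}(Q_0)=\Sigma_0$. Indeed, if $w\in\Sigma_0$ then $w\notin\cN_{\pi_Q}$, so $w$ is the unique preimage of $\pi_Q(w)$. Consequently $\pi_Q(\widetilde N)\cap Q_0=\emptyset$, and therefore $\lambda_p(Q\setminus Q_0)=\mu_p(\pi_Q^{-1}(Q\setminus Q_0))=\mu_p(\widetilde N)=0$. Measurability holds because $\Sigma$ is compact and $\pi_Q$ is continuous and injective on the Borel set $\Sigma_0$, so images of Borel sets are Borel (Lusin--Souslin); alternatively one can take completions. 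The same argument applies to $K_0$ with $\pi_K$ and $\nu_p$.

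Now set $\Phi=\pi_K\circ(\pi_Q|_{\Sigma_0})^{-1}:Q_0\to K_0$. It is a bijection, and it is measure preserving because for Borel $A\subset K_0$ we have $\Phi^{-1}(A)=\pi_Q(\pi_K^{-1}(A)\cap\Sigma_0)$, which gives $\lambda_p(\Phi^{-1}A)=\mu_p(\pi_K^{-1}A)=\nu_p(A)$. For the conjugacy I use the standard identities $\pi_Q\circ\tau_i=g_i\circ\pi_Q$ and $\pi_K\circ\tau_i=f_i\circ\pi_K$, which follow directly from the nested-intersection definitions. Take $x\in Q_0\cap g_i^{-1}(Q_0)$ and write $x=\pi_Q(w)$ with $w\in\Sigma_0$. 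Then $\tau_iw\in\Sigma_0$ and $\pi_Q(\tau_iw)=g_i(x)$. Injectivity on $\Sigma_0$ then gives $\Phi(g_ix)=\pi_K(\tau_iw)=f_i(\pi_K w)=f_i(\Phi x)$.

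The main obstacle is the choice of the exceptional set. Simply removing $\cN_{\pi_K}\cup\cN_{\pi_Q}$ does not give a set that is stable under the maps $\tau_i$, and without that stability the identity $\Phi\circ g_i=f_i\circ\Phi$ cannot be checked pointwise. The saturation step, which relies on the shift-quasi-invariance of Bernoulli null sets, is the delicate point. One also has to check carefully that the images of $\Sigma_0$ have full measure, and this is exactly where the identity $\pi_Q^{-1}(Q_0)=\Sigma_0$, coming from non-branching on $\Sigma_0$, is used.
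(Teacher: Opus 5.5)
Your proof is correct and follows the construction the paper relies on. The paper gives no argument of its own and defers to \cite{Med2026}, but it encodes the map as $T=U_Q^{-1}U_K$ with $\pi_Q$ bijective $\mu$-a.e. Your $\Phi=\pi_K\circ(\pi_Q|_{\Sigma_0})^{-1}$, with full measure of $Q_0$ and $K_0$ obtained from $\pi_Q^{-1}(Q_0)=\Sigma_0=\pi_K^{-1}(K_0)$, is exactly that construction.

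Your closing remark overstates one point: the shift-saturation is not needed for the conjugacy on $Q_0\cap g_i^{-1}(Q_0)$. If $g_ix\in Q_0$, its unique code in $\Sigma\setminus(\cN_{\pi_K}\cup\cN_{\pi_Q})$ must equal $iw$, because $\pi_Q(iw)=g_ix$. So removing $\cN_{\pi_K}\cup\cN_{\pi_Q}$ alone suffices, and the saturation only buys the stronger invariance $g_i(Q_0)\subset Q_0$.
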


This is a reformulation of \cite[Theorem~5.7]{Med2026}.

\begin{remark}\label{rem:IFS-freedom}
The correspondence $f_i\leftrightarrow g_i$ used in Theorem~\ref{thm.isomorph} is not unique: any permutation $\sigma\in\mathfrak{S}_k$ yields an equally valid isomorphism $\Phi_\sigma$ satisfying $\Phi_\sigma\circ g_i = f_{\sigma(i)}\circ\Phi_\sigma$. Since each choice preserves the measure structure and the self-similar partition, the transported operators constructed in the sequel depend on this choice only through the labeling of the cells $K_w$. For the error estimates in Section~\ref{sec.prop}, all such choices lead to the same convergence rates, so the particular convention adopted here (the identity permutation) entails no loss of generality.
\end{remark}

\subsection{Transport of function spaces}
Having constructed an isomorphic IFS \((Q,\mathcal{G},\lambda_p)\), we now transport the measure-space isomorphism to the function spaces \(L^1(K,\nu_p)\) and \(L^1(Q,\lambda_p)\).

By Assumption~\ref{as.small} and \eqref{NpiQ},  the image measures
$\nu=(\pi_K)_*\mu$ and $\lambda=(\pi_Q)_*\mu$
are measure-preserving\footnote{Since \(p\) is fixed, we henceforth omit
  it from the notation of the measures
\(\mu\), \(\nu\), and \(\lambda\).}.

Define the pullback operators $U_K:L^1(K,\nu)\to L^1(\Sigma,\mu)$ and
$U_Q:L^1(Q,\lambda)\to L^1(\Sigma,\mu)$
by
\[
U_K f = f\circ \pi_K,
\qquad
U_Q g = g\circ \pi_Q .
\]
Since \(\pi_K\) and \(\pi_Q\) are measure-preserving, both operators are linear
isometries onto their respective ranges. Here, the term
\emph{isometry} may be understood in the $L^1$- or $L^2$-sense,
namely as a linear map
preserving the corresponding norm. In the \(L^2\)-setting, the  isometries
$U_K$ and $U_Q$ preserve the inner product as well.
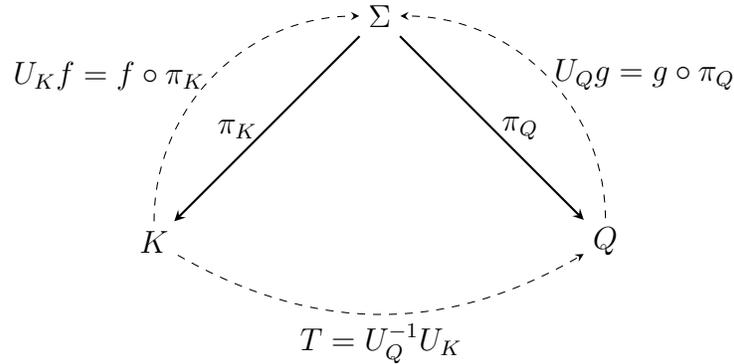
\begin{figure}[h]
  \begin{center}
    \begin{tikzpicture}[>=stealth, node distance=3cm, every node/.style={font=\large}]

\node (Sigma) at (0,3) {$\Sigma$};
\node (K) at (-3,0) {$K$};
\node (Q) at (3,0) {$Q$};

\draw[->, thick] (Sigma) -- (K) node[midway, left] {$\pi_K$};
\draw[->, thick] (Sigma) -- (Q) node[midway, right] {$\pi_Q$};

\draw[->, dashed, bend left=45] (K) to node[midway, left] {$U_Kf=f\circ\pi_K$} (Sigma);
\draw[->, dashed, bend right=45] (Q) to node[midway, right] {$U_Qg=g\circ\pi_Q$} (Sigma);
\draw[->, dashed, bend right=30] (K) to node[midway, below] {$T = U_Q^{-1}U_K$} (Q);
\end{tikzpicture}
   
\end{center}
\caption{The isomorphism between measure spaces $(K,\nu)$ and $(Q,\lambda)$ induces a
  corresponding isomorphism between the function spaces
  $L^1(K,\nu)$ and $L^1(Q,\lambda)$.}
\label{f.diagram}
\end{figure}

Moreover, since $\pi_Q$ is bijective $\mu$-a.e., the inverse of $U_Q$ is well-defined (up to null sets) on $L^1(\Sigma,\mu)$ by $U_Q^{-1}F= F\circ \pi_Q^{-1}$.
This allows us to define a canonical operator
\begin{equation}\label{def-T}
T:L^1(K,\nu)\longrightarrow L^1(Q,\lambda),
\qquad
Tf := U_Q^{-1}(U_K f),
\end{equation}
Equivalently, \(Tf\) is the unique (up to \(\lambda\)-null sets) function satisfying
\begin{equation}\label{Tf-conj}
f\circ\pi_K = (Tf)\circ\pi_Q
\quad\mu\text{-a.e.}.
\end{equation}

By construction, \(T\) is a surjective linear isometry preserving positivity and integrals, i.e.,
$\int_K f\,d\nu = \int_Q Tf\,d\lambda$ for every $f\in L^1(K,\nu)$.

\begin{figure}[h]
 	\centering
 \textbf{a}\includegraphics[width =.45\textwidth]{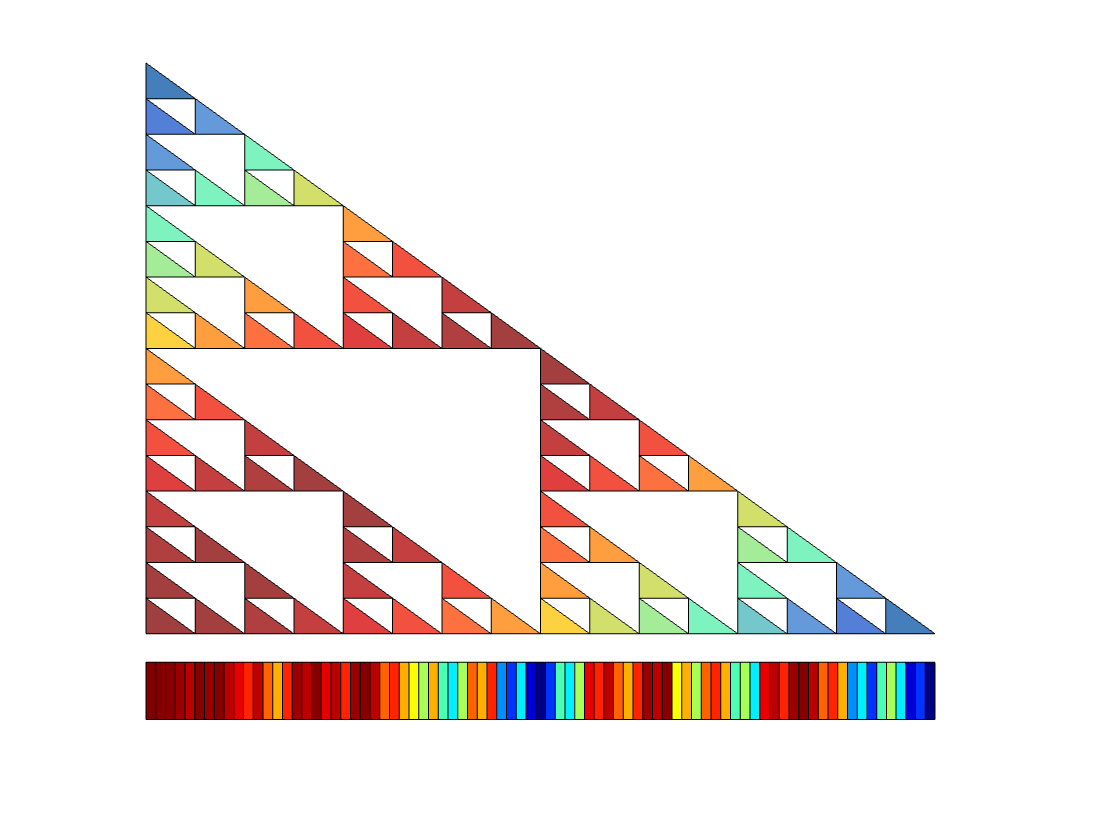}
 \textbf{b} \includegraphics[width =.45\textwidth]{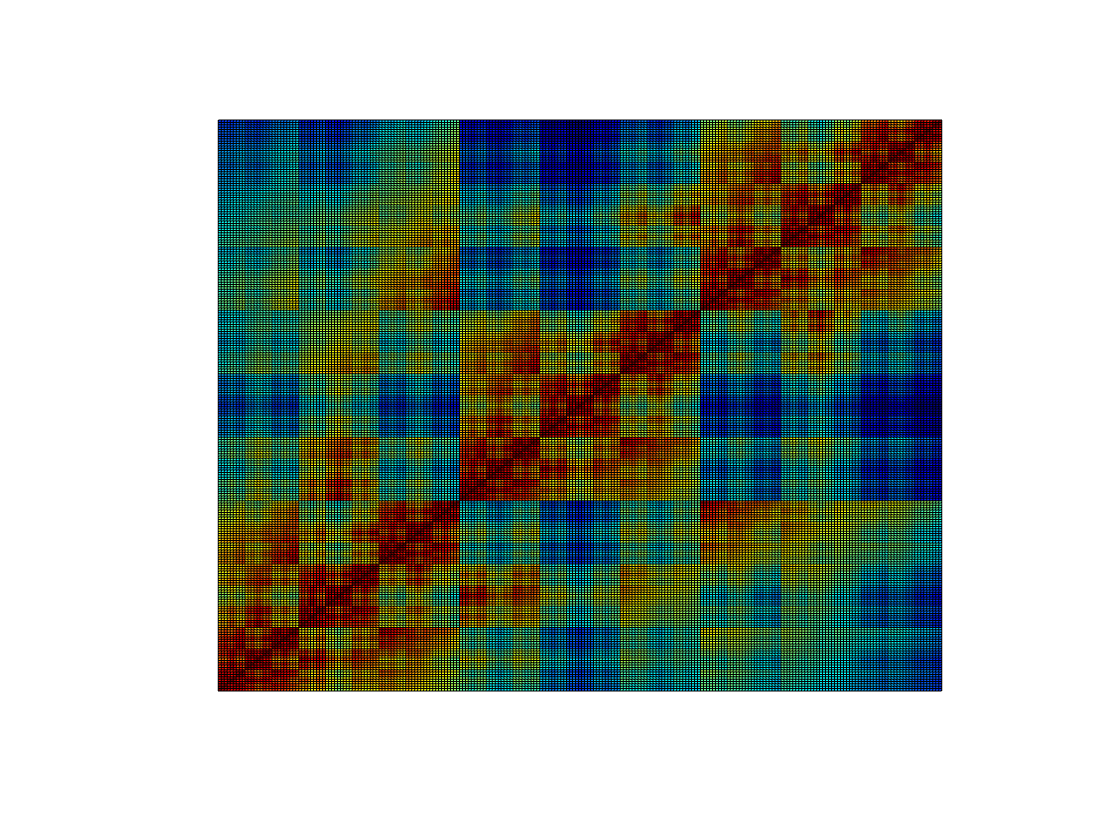}\\
  \caption{ \textbf{a}. The heat map of the function $f(x)=e^{-|x_1-x_2|}$ on SG
    $G\subset\R^2$ (above) and it is representation as a function on $[0,1]$ (below).
    \textbf{b}. Graphon representation of $W(x,y)=e^{-\|x-y\|}$, $(x,y)\in G\times G\subset\R^2\times\R^2$.
   }\label{f.graphon}
 \end{figure}

 \subsection{Martingale representation and transport of kernels}
The following martingale representation of $Tf$ is useful both conceptually and computationally.

Given $f\in L^1(K,\nu)$, we generate a martingale sequence $(f_n, n\in \N)$ as follows:
  $$
  f_n\doteq\conE{f}{\cK_n}(x)=\sum_{w\in\Sigma_n} \nu_w(f) \1_{K_w}(x),\quad
  \nu_{w}(f) =\fint_{K_w} f(y) d\nu (y),
  $$
  where $\conE{f}{\cK_n}$ stands for the conditional expectation with respect to the
  $\sigma$-algebra of subsets of $K$ generated by the partition,
  $\cK_n=\sigma\left(\{ K_w,\; w\in\Sigma_n\} \right)$.
  By construction, $\conE{f_{n+1}}{\cK_n}=f_n$. Thus, $(f_n)$ form a martingale sequence converging
  to $f$ $\nu$-a.e. and in $L^1(K,\nu)$ by \cite[Theorem~7.1.3]{Stroock-Prob}.

Next, define $\tilde f_n:Q\to\R$ by
$$
\tilde f_n(x) =\sum_{w\in\Sigma_n} \nu_w(f) \1_{Q_w}(x).
$$
It is shown in \cite{Med2026}, that the sequence $(\tilde f_n)$ converges to
$\tilde f\in L^1(Q,\lambda)$ $\lambda$-a.e. and in $L^1(Q,\lambda)$.
Moreover, $\tilde f=Tf$.  
  
Under the isomorphism constructed above, $f$ and $\tilde f$ generate the same
projections, i.e., $\nu_w(f)=\lambda_w(\tilde f)$ for every $w\in \Sigma^\ast$.

The same construction applies to kernels. Given $W\in L^1(K\times K,\nu\times\nu)$, define
$$
W_{wv}:=\fint_{K_w\times K_v} W\,d(\nu\times\nu),
$$
and set
$$
\tilde W_n(x,y)\doteq \sum_{w,v\in\Sigma_n} W_{wv}\,\1_{Q_w\times Q_v}(x,y).
$$
Then $(\tilde W_n)$ converges in $L^1(Q\times Q,\lambda\times\lambda)$ (and a.e. along a subsequence) to a function $\tilde W$, which is precisely the image of $W$ under the induced isomorphism between $L^1(K\times K,\nu\times\nu)$ and $L^1(Q\times Q,\lambda\times\lambda)$.

\section{Nonlocal dynamics on self-similar sets and Galerkin/IPS approximations}\label{sec.evolution}\label{sec.self-IPS}
\setcounter{equation}{0}

\subsection{A nonlocal evolution problem on $K$}
Let $(K,\{f_i\}_{i=1}^k, \nu)$ be a probabilistic IFS and consider the following nonlocal
evolution equation on $K$
\begin{align}\label{ss-heat}
  \p_tu(t,x)& =F\bigl(t,u(t,x)\bigr)+\int_K W(x,y) D\left(u(t,x),u(t,y)\right)d\nu(y), \\
  \label{ss-heat-ic}
  u(0,x)& =g(x),\qquad x\in K.
\end{align}
We assume $W\in L^2(K\times K, \nu\times\nu)$ and $g\in L^2(K,\nu)$. In addition,
$$
 \bar W\doteq \max\left\{ \esssup_{x\in K} \int_K |W(x,y)| d\nu(y), \;
  \esssup_{y\in K} \int_K |W(x,y)| d\nu(x)\right\}<\infty.
$$
Functions $F(t,u)$ and $D(u,v)$ are jointly continuous and 
\begin{align}\lbl{Lip-f}
  |F(t,u)-F(t,u^\prime)|\le L_F |u-u^\prime|,& \quad \forall t\in\R, \; u, u^\prime\in \R,\\
  \lbl{Lip-D}
  |D(u,v)-D(u^\prime,v^\prime)|\le L_D \left(|u-u^\prime| +|v-v^\prime|\right),& \quad \forall
                                                                                 u,v, u^\prime, v^\prime\in \R,
\end{align}
where $L_F$ and $L_D$ are positive Lipschitz constants.
In addition,
\be\lbl{bound-D}
\sup_{K\times K} |D(u,v)|\le 1.
\ee
Existence and uniqueness of solutions to \eqref{ss-heat}--\eqref{ss-heat-ic} on finite time intervals, with
$u\in C\left([-T, T]; L^2(K,\nu)\right)$, follow from a standard contraction mapping argument; see \cite[Theorem~4.2]{Med2026}.

In many examples one also has an a priori bound of the form
\begin{equation*}
  \sup_{t\in[0,T]}\|u(t,\cdot)\|_{L^\infty(K,\nu)}\le M.
\end{equation*}
In that case, the global assumptions \eqref{Lip-D} and \eqref{bound-D} may be replaced by their local counterparts. More precisely, it is enough to assume that $D$ is locally Lipschitz on $\R^2$ and that for every $M>0$ there exists $C_D(M)$ such that
$|D(u,v)| \le C_D(M)$ whenever $|u|,|v|\le M$.
We shall implicitly use this observation in some of the PDE examples below.

\subsection{Galerkin discretization and deterministic self-similar IPS}\label{sec.Galerkin}
We begin by discretizing the self-similar domain $K$. 
To this end, fix $m \in \mathbb{N}$ and consider the 
self-similar partition $(K_w)_{w \in \Sigma_m}$ of $K$, 
defined by
\begin{equation}\label{partition}
    \mathcal{K}^m\doteq \left\{K_w:\;K_w=f_w(K),\; |w|=m\right\},\qquad
  K=\bigcup_{|w|=m} K_w,
\end{equation}
where we recall that $f_w$ stands for $f_{w_1}\circ f_{w_2} \circ\dots\circ f_{w_m}$.

The partition \eqref{partition} defines a finite-dimensional
subspace of \( \mathcal{H} \doteq L^2(K,\nu) \):
\[
\mathcal{H}^m
:= \operatorname{span}\{\mathbf{1}_A : A \in \mathcal{K}^m\}.
\]
Throughout, \( \mathbf{1}_A \) denotes the indicator function of the set
\( A \).
Although the sets \( K_w \) and \( K_v \) corresponding to distinct
\( w,v \in \Sigma_m \) may intersect, their intersections have
\( \nu \)-measure zero by \eqref{nu-cyl}. Consequently, the resulting
overlaps in the supports of the basis functions do not affect the
convergence properties of the \( L^2 \)-projections onto
\( \mathcal{H}^m \).

We now construct the Galerkin approximation. The solution of the initial
value problem \eqref{ss-heat}–\eqref{ss-heat-ic} is approximated by a
piecewise constant function of the form
\begin{equation}\label{sum}
  u^m(t,x)
  = \sum_{|w|=m} u_w(t)\,\mathbf{1}_{K_w}(x).
\end{equation}
Inserting \eqref{sum} into \eqref{ss-heat} and projecting the resulting
equation onto \( \mathcal{H}^m \) yields 
\begin{equation}\label{re-KM-ss}
  \partial_t u^m(t,x)
  = F\bigl(t,u^m(t,x)\bigr)
  + \int_K W^m(x,y)\,
  D\bigl(u^m(t,x),u^m(t,y)\bigr)\, d\nu(y),
\end{equation}
where the discretized kernel \( W^m \) is given by
\begin{equation}\label{def-Wm}
  W^m
  := \sum_{|w|,|v|=m} W_{wv}\,\mathbf{1}_{K_w \times K_v}, \quad W_{wv}=\fint_{K_w \times K_v}W\,d(\nu\times \nu).
\end{equation}
The initial condition \eqref{ss-heat-ic} is approximated as follows
\begin{equation}\label{approx-g}
  g^m
  := \sum_{|w|=m} g_w\,\mathbf{1}_{K_w},
  \qquad
  g_w := \fint_{K_w} g\, d\nu .
\end{equation}
It is instructive to rewrite the discrete problem \eqref{re-KM-ss}, \eqref{approx-g}
as an initial value problem for a system of ODEs:
\begin{align}\label{ss-KM}
  \dot u_w &= F(t, u_w) + \sum_{|v|=m} W_{wv} D(u_w, u_v)\nu(K_v),\quad
             |w|=m,\\
  \label{ss-KM-ic}
             u_w(0) &= g_w, \quad  |w|=m,
\end{align}
where 
\begin{equation*}
W_{wv}=\fint_{K_w\times K_v} W(x,y) \,d(\nu\times\nu)(x,y), \quad g_w=\fint_{K_w} g\,d\nu.
\end{equation*}
In this formulation, the Galerkin approximation of 
\eqref{ss-heat}--\eqref{ss-heat-ic} admits a natural 
interpretation as an IPS.  $D(u_w,u_v)$ defines the 
pairwise interaction between particles labeled by the words $w,v\in\Sigma_m$, while $W_{wv}$ specifies the 
corresponding interaction weight.

As a system obtained by discretizing a PDE on a self-similar domain, \eqref{ss-KM} inherits the self-similar structure of the underlying continuum model.

\subsection{Random self-similar networks and continuum limit}
Along with the deterministic model \eqref{ss-KM}, we consider an IPS on a $W$-random graph:
\begin{equation}\label{KM-Wss}
  \dot{\mathsf{u}}_w=F(t, \mathsf{u}_w) +\sum_{|v|=n} \xi_{wv} D(\mathsf{u}_w, \mathsf{u}_v)
  \nu(K_v),\quad
  w\in \Sigma_n,
\end{equation}
where $\xi_{wv}$ are independent Bernoulli random variables such that
\begin{equation}\label{ss-Bernoulli}
\fP(\xi_{wv}=1)=\fint_{K_{wv}} W d(\nu\times\nu),\quad \fP(\xi_{wv}=0)=1-\fP(\xi_{wv}=1).
\end{equation}
For the random model, in addition to integrability of $W$ we also assume $W\ge 0$
and $\int_{K\times K} W d(\nu\times\nu)=1$.

Both the deterministic model \eqref{ss-KM} and its
random counterpart \eqref{KM-Wss} converge to the continuum limit \eqref{ss-heat}.
The use of the random graph in \eqref{KM-Wss} may be viewed as a Monte--Carlo approximation of the nonlocal term. This viewpoint is particularly useful numerically, and it suggests sparse random approximations of the lifted kernels as a natural extension; see \cite{KVMed22}.

In sparse variants one should expect a three-way balance between the regularity of
$W$ (or of the lifted kernel), the sampling density of the random graph, and the
Galerkin scale $m$. This is the self-similar analogue of the regularity--sparsity
tradeoff familiar in graphon-based nonlocal dynamics, and it would be natural to treat
it as a separate refinement of the deterministic theory below.

The convergence of the deterministic and random Galerkin schemes is summarized by the following theorem, which is a reformulation of the main continuum-limit estimate proved in \cite{Med2026}.

\begin{theorem}\label{thm.ss-heat}
  Let $u(t,x)$ be the solution of \eqref{ss-heat}--\eqref{ss-heat-ic} with $g\in L^2(K,\nu)$. Likewise, suppose
$$
    u^n(t,x) = \sum_{|j|=n} u_{j}(t) \1_{K_j}(x),\qquad
    \mathsf{u}^n(t,x) = \sum_{|j|=n} \mathsf{u}_{j}(t) \1_{K_j}(x),
$$
  solve the IVPs for \eqref{ss-KM} and \eqref{KM-Wss}, respectively, and satisfy
  $u^n(0,\cdot)=\mathsf{u}^n(0,\cdot)= g^n$.
  Then, for $v\in\{u^n,\mathsf{u}^n\}$,
  \begin{equation}\label{ss-approx-heat}
    \|u-v\|_{C([0,T]; L^2(K,\nu))} \le
    C_T \left( \|g-g^n\|_{L^2(K,\nu)} + \|W-W^n\|_{L^2(K\times K, \nu\times\nu)}\right).
  \end{equation}
  In the random case $v=\mathsf{u}^n$, estimate \eqref{ss-approx-heat} holds almost surely, i.e., for almost every realization of the $W$-random graph \eqref{ss-Bernoulli}.
\end{theorem}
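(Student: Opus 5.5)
We treat the deterministic and random cases separately. In both we estimate the difference between \eqref{ss-heat} and the Galerkin/IPS equation in $L^2(K,\nu)$ and close with Gronwall's inequality.

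\textbf{Deterministic case.} Set $e(t)=u(t)-u^n(t)$. Subtracting \eqref{re-KM-ss} from \eqref{ss-heat} gives
\[
\p_t e = \bigl[F(t,u)-F(t,u^n)\bigr] + \int_K W\bigl[D(u(x),u(y))-D(u^n(x),u^n(y))\bigr]d\nu(y) + \int_K (W-W^n)\,D(u^n(x),u^n(y))\,d\nu(y).
\]
We multiply by $e$ and integrate over $K$.

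The first term is bounded by $L_F\|e\|^2$ using \eqref{Lip-f}.

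For the second term, \eqref{Lip-D} bounds the integrand by $L_D|W(x,y)|(|e(x)|+|e(y)|)$. Then Schur's test with the constant $\bar W$ bounds the operator norm of $|W|$ on $L^2(K,\nu)$ by $\bar W$. This gives a bound $2L_D\bar W\|e\|^2$.

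For the third term, \eqref{bound-D} and Cauchy--Schwarz give
\[
\Bigl|\int_K (W-W^n)\,D\,d\nu(y)\Bigr| \le \|(W-W^n)(x,\cdot)\|_{L^2(\nu)}.
\]
Its $L^2(K,\nu)$ norm in $x$ is therefore at most $\|W-W^n\|_{L^2(K\times K)}$.

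Combining the three bounds,
\[
\tfrac{d}{dt}\|e\|^2 \le C\|e\|^2 + \|W-W^n\|^2_{L^2}.
\]
Gronwall's inequality with $e(0)=g-g^n$ yields \eqref{ss-approx-heat} for $v=u^n$, with $C_T$ depending on $T$, $L_F$, $L_D$ and $\bar W$.

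\textbf{Random case.} We compare $\mathsf u^n$ with $u^n$ and then use the triangle inequality. The two systems differ only in the weights, which are $\xi_{wv}$ versus $W_{wv}=\E\xi_{wv}$. Write $\eta_{wv}=\xi_{wv}-W_{wv}$ and $\mathsf e=\mathsf u^n-u^n$. Repeating the energy estimate, the new source term is
\[
R_w(t)=\sum_v \eta_{wv}\,D(u_v(t),u_w(t))\,\nu(K_v),
\]
evaluated along the deterministic solution $u^n$. Feeding the random solution into $D$ instead produces a Lipschitz term. That term is controlled by the $\ell^1$ row and column sums of $\xi_{wv}\nu(K_v)$, which are bounded by $\bar W+o(1)$ almost surely.

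The main obstacle is bounding $\sup_t\sum_w \nu(K_w)R_w(t)^2$ almost surely. For fixed $t$ and $w$, $R_w$ is a sum of independent, centered, bounded variables with variance at most $\sum_v \nu(K_v)^2 \le \max_v\nu(K_v)=\max_{|v|=n}p_v\to0$ exponentially in $n$.

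1. We first remove the supremum over $t$. We fix $u^n$ on a finite time grid and use the Lipschitz continuity of $D$ and of $u^n$ in $t$ (its derivative is bounded).
2. We then combine a Bernstein/Hoeffding inequality with a union bound over $w$ and the grid points. Since the variance decays exponentially in $n$, the failure probabilities are summable in $n$.
3. Borel--Cantelli then shows the random error vanishes almost surely, at an exponential rate.

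The remaining question is whether this rate can be absorbed into the right-hand side of \eqref{ss-approx-heat}. If it cannot be shown to be dominated by $\|W-W^n\|_{L^2}$ directly, I would interpret the estimate as holding for $n$ large depending on the realization, or with the fluctuation absorbed into $C_T$. This matching of the random fluctuation rate to the deterministic error is the delicate step. Finally, the triangle inequality $\|u-\mathsf u^n\|\le\|u-u^n\|+\|u^n-\mathsf u^n\|$ concludes.
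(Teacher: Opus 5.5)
The paper gives no proof of this theorem; it is quoted from \cite{Med2026}, so there is no in-text argument to compare with. Your deterministic argument is the standard energy--Gronwall proof and is correct. Keeping $W$ rather than $W^n$ in the Lipschitz term lets Schur's test with $\bar W$ give $2L_D\bar W\|e\|^2$. The bound $|D|\le1$ together with Cauchy--Schwarz on the probability space $(K,\nu)$ then controls the remainder by $\|W-W^n\|_{L^2}$.

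The gap is the last step of the random case, and it cannot be closed, because the random half of the statement is false as written. Read literally, \eqref{ss-Bernoulli} needs $W_{wv}\le1$. Together with $\int W\,d(\nu\times\nu)=\sum_{w,v}W_{wv}\nu(K_w)\nu(K_v)=1$, this forces $W_{wv}=1$ for all $w,v$, so $\xi_{wv}\equiv1$ and $\mathsf u^n=u^n$ trivially. The meaningful hypothesis is $0\le W\le1$, and under it the estimate fails. Take $p=(k^{-1},\dots,k^{-1})$, $W\equiv c$ with $c\in(0,1)\setminus\mathbb{Q}$, $F=0$, $D(a,b)=\sin(b-a)$ and $g=\1_{K_1}$. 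Then $W^n=W$ and $g^n=g$, so the right-hand side of \eqref{ss-approx-heat} is $0$ for every $n\ge1$, and $u^n=u$. But for $w_1=1$,
\[
\dot{\mathsf u}_w(0)=-\sin(1)\,k^{-n}\,\#\{v:\ v_1\neq1,\ \xi_{wv}=1\}
\neq -c\,(1-k^{-1})\sin(1)=\dot u_w(0),
\]
since the left side is a rational multiple of $\sin 1$ and the right side is not. So $\mathsf u^n(t)\neq u(t)$ for small $t>0$, and $\|u-\mathsf u^n\|_{C([0,T];L^2(K,\nu))}>0$ for every $n$ and every realization. Neither ``for $n$ large depending on the realization'' nor ``absorbing the fluctuation into $C_T$'' (even a random $C_T$) can rescue this, since no multiple of zero dominates a positive number. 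What your argument does prove, and what you should state, is the corrected estimate: almost surely, for $n\ge n_0(\omega)$,
\[
\|u-\mathsf u^n\|_{C([0,T];L^2)}\le C_T\bigl(\|g-g^n\|+\|W-W^n\|+\delta_n\bigr),
\]
with $\delta_n\to0$ exponentially.

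Your route to this corrected statement can also be simplified in two places. First, since $\xi_{wv}\in\{0,1\}$ and $\nu$ is a probability measure, the row and column sums of $\xi_{wv}\nu(K_v)$ are at most $1$ surely. The Lipschitz term therefore needs no concentration argument. Second, since $\mathsf e(0)=0$, Gronwall gives $\sup_{t\le T}\|\mathsf e(t)\|^2\le e^{CT}\int_0^T\|R(s)\|^2ds$. You therefore never need a supremum of $R$ over $t$, so the time grid, Bernstein bound and union bound can all be dropped. Instead, $\E\int_0^T\|R\|^2\le \tfrac T4\max_{|v|=n}\nu(K_v)=\tfrac T4(\max_ip_i)^n$. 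Markov's inequality with Borel--Cantelli then yields $\delta_n=(\max_ip_i)^{n/4}$.
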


The rate of convergence of the right-hand side in \eqref{ss-approx-heat} can be quantified by taking into account
the regularity of the kernel \( W \) and the initial data \( g \). For Euclidean domains, convergence rates were established
using \( L^p \)-Lipschitz regularity in \cite{KVMed22}. For self-similar domains, analogous estimates were obtained in \cite{Med2026}.
In the self-similar setting, the definition of \( L^p \)-Lipschitz spaces must be suitably adapted to account for the
underlying self-similar structure. 

The randomized model \eqref{KM-Wss} can be further extended to incorporate sparse graphs. From a computational perspective, sparsity can be used to reduce cost without sacrificing accuracy; see \cite{KVMed22} for the Euclidean/graphon counterpart.

\section{Abstract lifting framework and two-scale approximation}\label{sec.prop}
\setcounter{equation}{0}

In this section we formulate the abstract transport-and-discretization procedure
that underlies the rest of the paper. Starting from a reference PDE on $Q$, we
first introduce a nonlocal regularization at scale $\eps$, then transport it to
the self-similar set $K$, and finally discretize the lifted equation on the
level-$m$ partition of $K$. The outcome is a two-parameter approximation scheme
and an explicit self-similar interacting particle system.

\subsection{Transporting a reference PDE from $Q$ to $K$}\label{sec:nonlocalK}

We fix once and for all a self-similar set $K$ equipped with a stationary
self-similar probability measure $\nu$, and, in the framework of
Section \ref{sec.isomorph}, we assume Assumption~\ref{as.small} so that the 
canonical isomorphism of Section \ref{sec.isomorph} applies.
In particular, we use the linear isometry
$$
  T: L^2(K,\nu;\R^p) \longrightarrow L^2(Q,\lambda;\R^p)
$$
introduced in \eqref{def-T}--\eqref{Tf-conj} (and still denoted by $T$ in the
$L^2$-setting), together with its inverse $T^{-1}$. Recall that $Q=[0,1]$.
Here $\lambda$ denotes the measure $\lambda_p=(\pi_Q)_*\mu_p$ introduced in Section~\ref{sec.iso-IFS}; in the general probabilistic IFS setting it is the stationary measure on $Q=[0,1]$ associated with the same probability vector $p$ as on $K$. In particular, if $p=(k^{-1},\dots,k^{-1})$, then $\lambda$ coincides with the Lebesgue measure on $[0,1]$. Since the concrete PDE examples below use standard Sobolev spaces and convolution kernels on $Q$, we henceforth work in this uniform case, so that $\lambda=dx$.

By Theorem~\ref{thm.isomorph}, there exist subsets $Q_0\subset Q$ and
$K_0\subset K$ of full measure and an invertible measure-preserving map
$\Phi:Q_0\to K_0$ such that for every $f\in L^2(K,\nu;\R^p)$ and every
$g\in L^2(Q,\lambda;\R^p)$,
$$
  (Tf)(x)= f\big(\Phi(x)\big)\quad \text{for $\lambda$-a.e. }x\in Q,
  \qquad
  (T^{-1}g)(x)= g\big(\Phi^{-1}(x)\big)\quad \text{for $\nu$-a.e. }x\in K.
$$
We will use $\Phi$ (or $\Phi^{-1}$) only inside functions or kernels, always modulo null sets.

\paragraph{Reference PDE on $Q$.}
We start from an evolution equation on $Q=[0,1]$ of the form
\begin{equation}\label{pdeQ}
  \partial_t u = \bA u, \qquad u(0,\cdot)=u^0(\cdot),
\end{equation}
where $u(t,\cdot)\in L^2(Q,\lambda;\R^p)$ and $\bA$ is an (unbounded) linear
operator with dense domain $\cD(\bA)\subset L^2(Q,\lambda;\R^p)$, as in
\cite{PauTre2025}.
We endow $\cD(\bA)$ with the graph norm
$\|\varphi\|_{\cD(\bA)} := \|\varphi\|_{L^2(Q)} + \|\bA\varphi\|_{L^2(Q)}$.

For the purpose of constructing a corresponding evolution on $K$, it is
convenient to assume that $\bA$ is the generator of a $C_0$-semigroup
$(S(t))_{t\ge 0}$ on $L^2(Q,\lambda;\R^p)$, so that \eqref{pdeQ} is
well-posed in the mild sense, and the solution of \eqref{pdeQ} is $u(t) = S(t)u^0$.

One can treat semilinear/quasilinear perturbations $\partial_t u=\bA u+\cN(u)$
with $\cN$ locally Lipschitz by the usual variation-of-constants approach; we
focus on the linear part for the moment to keep the exposition light.

\paragraph{Transport of the dynamics from $Q$ to $K$.}
Define the transported state $\tilde u$ on $K$ by
\begin{equation}\label{u-tilde-def}
  \tilde u(t,\cdot) := T^{-1}u(t,\cdot) \in L^2(K,\nu;\R^p).
\end{equation}
Then the operator
$$
  \tilde\bA := T^{-1}\,\bA\,T,\qquad \cD(\tilde\bA) := T^{-1}\big(\cD(\bA)\big)
$$
is the generator of a $C_0$-semigroup on $L^2(K,\nu;\R^p)$ given by
$\tilde S(t):=T^{-1}S(t)T$, and $\tilde u$ is the (mild) solution of
\begin{equation}\label{pdeK-abstract}
  \partial_t \tilde u = \tilde\bA\tilde u,\qquad \tilde u(0,\cdot)=\tilde u^0:=T^{-1}u^0.
\end{equation}
This provides a purely operator-theoretic way of transporting the dynamics from $Q$ to $K$.

\begin{remark}\label{rem:intrinsic-vs-transported}
  The evolution \eqref{pdeK-abstract} should be interpreted as the
  \emph{transport} of the $Q$-dynamics through the symbolic/IFS identification.
  In general, $\tilde\bA$ is not the ``intrinsic'' (local) operator one may
  define on $K$ via analysis on fractals (e.g., Kigami's Laplacian, see
  \cite{FukShi92,Kig01,Str06}, which is the self-adjoint
  operator associated with the canonical renormalized energy/Dirichlet form on a
  post critically finite self-similar set---equivalently, the renormalized limit of graph
  Laplacians on prefractal graphs). It is
  instead the natural operator associated with the hierarchical partition
  induced by the IFS and the measure isomorphism $T$.
\end{remark}

\subsection{Nonlocal regularization on $Q$}

To connect \eqref{pdeQ} to interacting particle systems, we insert an
intermediate nonlocal integral approximation.

\paragraph{Integral approximation.}
Following \cite{PauTre2025}, we approximate $\bA$ by a family of integral operators
$$
  \bA_{\eps}u(x) := \int_Q \sigma_{\eps}(x,y)\,u(y)\,dy,\qquad x\in Q,
$$
where $\sigma_{\eps}$ is a kernel depending on $\eps>0$. For simplicity, and because this is the setting directly covered by Theorem~\ref{thm.ss-heat} after lifting to $K$, we often keep the sufficient hypothesis $\sigma_{\eps}\in L^2(Q\times Q)$. Conceptually, however, the approximation is better understood at the operator level: it is enough to require that $\bA_{\eps}\in\cL(L^2)$ and that the associated semigroups satisfy the uniform stability estimate stated below. In boundary-adapted constructions one often obtains this boundedness by Schur's test even when the kernel is not in $L^2$; see \cite{PauTre2025}.

In many PDEs of interest (transport, diffusion, conservation laws),
it is often more convenient to use the ``zero-mass'' form
$$
  \bA_{\eps}u(x) := \int_Q \gamma_{\eps}(x,y)\big(u(y)-u(x)\big)\,dy
$$
(integral terms plus bounded local terms), as will be illustrated in 
the examples treated further.

We denote by $u_{\eps}$ the solution of
\begin{equation}\label{pdeQ-eps}
  \partial_t u_{\eps} = \bA_{\eps}u_{\eps},\qquad u_{\eps}(0)=u^0.
\end{equation}

\paragraph{Stability and consistency requirements.}
In PDE examples, the operator norms $\|\bA_\eps\|_{\cL(L^2)}$ blow up as
$\eps\to0$ (typically like $\eps^{-1}$ for first order terms and $\eps^{-2}$ for diffusion),
because $\bA_\eps$ is a bounded approximation of the unbounded 
operator $\bA$. What is needed for well-posedness and error estimates is a 
\emph{uniform stability} of the associated semigroups. Moreover, the consistency 
estimate usually involves one extra derivative, so we allow a stronger norm.

Let $X$ be a Banach space such that $X\hookrightarrow L^2(Q,\lambda;\R^p)$ densely and
continuously, and $X\subset \cD(\bA)$. We denote its norm by $\|\cdot\|_X$.
The space $X$ is the regularity space in which the consistency error is measured. 
Concretely, it encodes the number of derivatives needed to control the Taylor remainder 
in the nonlocal-to-local approximation. It also ensures that the reference solution of 
\eqref{pdeQ} remains regular enough on $[0,T]$ for the Duhamel estimate leading to 
\eqref{Q-error}. In the examples treated further, $X=H^2(\T)$ is sufficient for first-order 
operators because one remainder derivative is needed, whereas $X=H^4(\T)$ is 
convenient for the heat equation because the second-order approximation leaves a 
remainder involving fourth derivatives.

In view of obtaining a clean two-step limit, we make the following standing assumption, which 
will have to be checked on each application example:

\begin{assumption}\label{ass:Aeps-consistency}
  There exist a dense subspace $\cD_0\subset X$ which is a core for $\bA$ and a function
  $r(\eps)\to0$ as $\eps\to0$ such that:
  \begin{enumerate}
  \item \textbf{Uniform semigroup stability:}
    for any $\eps>0$, $\bA_{\eps}$ generates a $C_0$-semigroup $(S_\eps(t))_{t\ge0}$ on
    $L^2(Q,\lambda;\R^p)$ and there exist constants $M\ge1$ and $\omega\in\R$, independent
    of $\eps$, such that
    $$
      \|S_\eps(t)\|_{\cL(L^2(Q))}\le M e^{\omega t}
      \qquad \forall t\ge0.
    $$
  \item \textbf{Consistency on the core:}
    $$
      \|\bA_{\eps}\varphi-\bA\varphi\|_{L^2(Q)}\le r(\eps)\,\|\varphi\|_{X}
      \qquad \forall\varphi\in\cD_0.
    $$
  \end{enumerate}
\end{assumption}

Under Assumption~\ref{ass:Aeps-consistency}, standard Duhamel and Gronwall arguments yield convergence $u_{\eps}\to u$ for $u^0\in X$ on any finite time interval $[0,T]$ on which the reference solution remains in $X$ (see \cite{PauTre2025}), with an estimate of the form
\begin{equation}\label{Q-error}
  \|u_{\eps}-u\|_{C([0,T];L^2(Q))} \le C_T\,r(\eps)\,\|u^0\|_{X}.
\end{equation}
We will not reproduce the functional analytic proof here; our focus is on
constructing explicit kernels $\sigma_{\eps}$ for representative PDEs, and on
their discretization on $K$.

\subsection{Lifting the nonlocal equation to $K$ and deriving a self-similar IPS}\label{sec_liftK}

\paragraph{Lift of the nonlocal equation.}
Define
$$
  \tilde u_{\eps} := T^{-1}u_{\eps},\qquad \tilde\bA_{\eps} := T^{-1}\bA_{\eps}T.
$$
Then $\tilde u_{\eps}$ is the unique mild solution of
\begin{equation}\label{pdeK-eps}
  \partial_t\tilde u_{\eps} = \tilde\bA_{\eps}\tilde u_{\eps},\qquad \tilde u_{\eps}(0)=\tilde u^0.
\end{equation}
Moreover, by construction, $\tilde\bA_{\eps}$ admits an integral kernel
representation on $K$:
$$
  \tilde\bA_{\eps}\tilde u(x) = \int_K \tilde\sigma_{\eps}(x,y)\,\tilde u(y)\,d\nu(y),
$$
where one convenient choice is
\begin{equation}\label{sigma-lift-explicit}
  \tilde\sigma_{\eps}(x,y):=
  \begin{cases}
    \sigma_{\eps}\big(\Phi^{-1}(x),\Phi^{-1}(y)\big), & (x,y)\in K_0\times K_0,\\
    0, & \text{otherwise.}
  \end{cases}
\end{equation}
With this choice, $\tilde\sigma_{\eps}\in L^2(K\times K,\nu\times\nu)$ and
$\|\tilde\sigma_{\eps}\|_{L^2(K\times K)}=\|\sigma_{\eps}\|_{L^2(Q\times Q)}$.

In the more general Schur-kernel setting mentioned above (where only $\bA_{\eps}\in\cL(L^2)$), 
one should interpret the lifting primarily at the operator level, by defining 
$\tilde\bA_{\eps}:=T^{-1}\bA_{\eps}T$ first and only then averaging it on the cells. 
The explicit $L^2$ kernel formula \eqref{sigma-lift-explicit} is retained here because 
it is exactly the setting covered by Theorem~\ref{thm.ss-heat}.

\paragraph{Galerkin/IPS discretization on the self-similar partition.}
Let $\cK_m=\{K_w:|w|=m\}$ be the level-$m$ partition of $K$ and
$\cH_m:=\mathrm{span}\{\1_{K_w}:|w|=m\}\subset L^2(K,\nu)$ as in
Section~\ref{sec.Galerkin}. Denote by $P_m$ the $L^2$-orthogonal projector onto $\cH_m$.
Define the step-kernel approximation
$$
  \tilde\sigma_{\eps,m}(x,y)
  :=\sum_{|w|,|v|=m} \tilde\sigma^{(\eps)}_{wv}\,\1_{K_w\times K_v}(x,y),
  \qquad
  \tilde\sigma^{(\eps)}_{wv}:=\fint_{K_w\times K_v}\tilde\sigma_{\eps}\,d(\nu\times\nu).
$$
Then the Galerkin approximation of \eqref{pdeK-eps} reads
$$
  \partial_t\tilde u_{\eps,m} = P_m\tilde\bA_{\eps}\tilde u_{\eps,m}
  = \int_K \tilde\sigma_{\eps,m}(\cdot,y)\,\tilde u_{\eps,m}(y)\,d\nu(y),
  \qquad \tilde u_{\eps,m}(0)=P_m\tilde u^0.
$$
Writing
$\tilde u_{\eps,m}(t,x)=\sum_{|w|=m} u^{\eps}_w(t)\,\1_{K_w}(x)$, we obtain the
finite-dimensional ODE system
\begin{equation}\label{IPS-linear}
  \dot u^{\eps}_w(t) = \sum_{|v|=m} \tilde\sigma^{(\eps)}_{wv}\,u^{\eps}_v(t)\,\nu(K_v),
  \qquad |w|=m.
\end{equation}
This is an IPS on the level-$m$ self-similar network.

It is worth emphasizing what \eqref{IPS-linear} represents. 
The unknowns are not attached to Euclidean mesh points, but to the cells $K_w$ 
of the level-$m$ self-similar partition. Thus each degree of freedom $u_w^{\eps}$ is 
the average state carried by one fractal cell, and the coefficients 
$\tilde\sigma^{(\eps)}_{wv}\,\nu(K_v)$ are the effective interaction rates between 
two such cells. In this sense, \eqref{IPS-linear} is simultaneously 
a Galerkin semi-discretization of the lifted nonlocal equation and 
an IPS on a hierarchical network with $N_m=k^m$ particles. 
This is one of the main interests of the construction: the network is not chosen ad hoc, 
but is generated canonically by the IFS itself. Increasing $m$ does not merely refine 
a uniform mesh; it reveals finer generations of the self-similar geometry. 
From the PDE viewpoint, this gives an explicit finite-dimensional approximation 
adapted to the fractal structure. From the network viewpoint, it provides a concrete 
family of multiscale dynamical systems whose continuum limit is a PDE-type evolution 
on $K$. This constructive bridge between the two levels seems to be a genuinely 
new feature of the present framework.

\paragraph{Generalization to nonlinear/nonlocal operators.}
More generally, if $\bA_{\eps}$ is nonlinear but has a nonlocal form
\begin{equation}\label{Aeps-nonlinear}
  (\bA_{\eps}u)(x) = \int_Q W_{\eps}(x,y)\,D\big(u(x),u(y)\big)\,dy,
\end{equation}
then its lift to $K$ yields a kernel $\tilde W_{\eps}$ and the Galerkin scheme
produces precisely the self-similar IPS
\begin{equation}\label{IPS-nonlinear}
  \dot u^{\eps}_w(t) = \sum_{|v|=m} W^{(\eps)}_{wv}\,D\big(u^{\eps}_w(t),u^{\eps}_v(t)\big)\,\nu(K_v),
\end{equation}
which is of the same type as \eqref{ss-KM}. Here,
$$
 W^{(\eps)}_{wv}:=\fint_{K_w\times K_v}\tilde
  W_{\eps}\,d(\nu\times\nu)= \fint_{Q_w\times Q_v} W_{\eps}\,d(\lambda\times\lambda).
$$
\begin{remark}\label{rem:boundedness-technical}
  The assumptions in Section \ref{sec.self-IPS} (in particular, boundedness of $D$ in
  \eqref{bound-D}) are tailored to globally Lipschitz dynamics on a compact
  state space. For the PDE examples below, one can either work in a regime
  where solutions are a priori bounded (e.g., $u^0\in L^\infty$ and maximum
  principles), or insert a bounded saturation or truncation as in
  \cite{PauTre2025} to fit the hypotheses of Theorem~\ref{thm.ss-heat}. 
  A complementary (and useful) improvement would be
  to extend the estimate \eqref{ss-approx-heat} to interaction functions $D$ of
  at most linear growth.
\end{remark}

\paragraph{The two-scale limit $m\to\infty$, $\eps\to0$.}
For fixed $\eps > 0$, the convergence of the Galerkin/IPS approximation
$\tilde u_{\eps,m} \to \tilde u_{\eps}$ as $m\to\infty$ follows from
Theorem~\ref{thm.ss-heat} applied to the kernel $\tilde W_{\eps}$
in the nonlinear case. In the linear case, $\tilde\bA_{\eps}$ is a bounded
operator on $L^2(K,\nu)$ (since $\tilde\sigma_{\eps} \in L^2(K \times K)$),
and the Galerkin convergence is a standard consequence of the approximation
of a bounded operator on the nested subspaces $H_m$; alternatively, it
follows from Theorem~\ref{thm.ss-heat} after truncating the interaction
function on bounded sets, as noted in Remark~\ref{rem:boundedness-technical}.
In both cases:
\begin{equation}\label{K-eps-Galerkin-error}
  \|\tilde u_{\eps,m}-\tilde u_{\eps}\|_{C([0,T];L^2(K,\nu))}
  \le C_T\Big(\|\tilde u^0-P_m\tilde u^0\|_{L^2(K,\nu)} + \|\tilde\sigma_{\eps}-\tilde\sigma_{\eps,m}\|_{L^2(K\times K,\nu\times\nu)}\Big).
\end{equation}
Combining \eqref{Q-error}, \eqref{u-tilde-def} and \eqref{K-eps-Galerkin-error},
we obtain the following result.

\begin{theorem}\label{thm:two-parameter}
Let $u$ solve \eqref{pdeQ}, let $\tilde u=T^{-1}u$, and for each $\eps>0$ let $\tilde u_{\eps,m}$ be the Galerkin/IPS approximation defined above. Under Assumption~\ref{ass:Aeps-consistency}, for every $T>0$,
\begin{equation}\label{global-error}
  \|\tilde u_{\eps,m}-\tilde u\|_{C([0,T];L^2(K,\nu))}
  \le C_T\Big( r(\eps)\,\|u^0\|_{X} + \|\tilde u^0-P_m\tilde u^0\|_{L^2(K,\nu)} + \|\tilde\sigma_{\eps}-\tilde\sigma_{\eps,m}\|_{L^2(K\times K,\nu\times\nu)}\Big).
\end{equation}
\end{theorem}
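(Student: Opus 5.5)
The proof is a triangle inequality through the intermediate lifted nonlocal solution $\tilde u_\eps=T^{-1}u_\eps$. For each $t\in[0,T]$ we write
\[
\|\tilde u_{\eps,m}(t)-\tilde u(t)\|_{L^2(K,\nu)}\le \|\tilde u_{\eps,m}(t)-\tilde u_\eps(t)\|_{L^2(K,\nu)}+\|\tilde u_\eps(t)-\tilde u(t)\|_{L^2(K,\nu)},
\]
and take the supremum over $t$. The second term is handled by the isometry property of $T^{-1}$. Since $\tilde u_\eps-\tilde u=T^{-1}(u_\eps-u)$ and $T^{-1}$ is a linear $L^2$-isometry (Section~\ref{sec.isomorph}), it equals $\|u_\eps(t)-u(t)\|_{L^2(Q)}$. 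Estimate \eqref{Q-error} then bounds it by $C_T r(\eps)\|u^0\|_X$.

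To justify \eqref{Q-error} rather than merely cite it, I would take $\varphi=u^0$ in $\cD_0$, or approximate it within the core, so that $u(s)=S(s)u^0$ stays in $X$ with $\|u(s)\|_X\le C_T\|u^0\|_X$. This is the standing regularity requirement on $[0,T]$. The Duhamel formula
\[
u_\eps(t)-u(t)=\int_0^t S_\eps(t-s)(\bA_\eps-\bA)u(s)\,ds
\]
combined with items 1 and 2 of Assumption~\ref{ass:Aeps-consistency} gives $\|u_\eps(t)-u(t)\|\le Me^{\omega T}\,T\,r(\eps)\sup_s\|u(s)\|_X$. A density argument then extends this from the core to all of $X$.

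The first term is the Galerkin error \eqref{K-eps-Galerkin-error}. In the linear case I would prove it directly rather than through Theorem~\ref{thm.ss-heat}, so as not to need the boundedness of $D$. The starting point is that $e:=\tilde u_{\eps,m}-\tilde u_\eps$ satisfies
\[
\partial_t e=\tilde\bA_{\eps,m}e+(\tilde\bA_{\eps,m}-\tilde\bA_\eps)\tilde u_\eps,
\]
where $\tilde\bA_{\eps,m}$ is the integral operator with kernel $\tilde\sigma_{\eps,m}$, which equals $P_m\tilde\bA_\eps P_m$. The Hilbert--Schmidt bound gives
\[
\|(\tilde\bA_{\eps,m}-\tilde\bA_\eps)\tilde u_\eps\|_{L^2}\le\|\tilde\sigma_{\eps,m}-\tilde\sigma_\eps\|_{L^2(K\times K)}\,\|\tilde u_\eps\|_{L^2}.
\]
Here $\|\tilde u_\eps(s)\|_{L^2}\le Me^{\omega s}\|u^0\|_{L^2}$ by item 1 and the isometry. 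Gronwall's inequality together with the initial error $\|e(0)\|=\|P_m\tilde u^0-\tilde u^0\|$ then yields \eqref{K-eps-Galerkin-error}. Adding the two bounds gives \eqref{global-error}.

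The main obstacle is the uniformity in $\eps$ of the constant $C_T$ in the Galerkin step. A naive Gronwall argument uses $\|\tilde\bA_{\eps,m}\|\le\|\tilde\sigma_\eps\|_{L^2}$, which blows up as $\eps\to0$, and then $C_T$ would depend on $\eps$. To avoid this I would try to show that the semigroup generated by $P_m\tilde\bA_\eps P_m$ on $\cH_m$ inherits the stability bound $Me^{\omega t}$. This is immediate in the dissipative case $M=1$, since compression by an orthogonal projection preserves the quasi-dissipativity $\langle\tilde\bA_\eps v,v\rangle\le\omega\|v\|^2$, and it also holds in the examples treated below. Granting that, the Duhamel form of the error equation gives a constant of size $MTe^{|\omega|T}\max(1,\|u^0\|)$, independent of $\eps$ and $m$. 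Otherwise the theorem should be read with $C_T$ allowed to depend on $\eps$ through the Galerkin term, as \eqref{K-eps-Galerkin-error} implicitly does.
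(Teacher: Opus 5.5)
Your proposal is correct and follows the paper's route. The paper proves the theorem in one line: it inserts $\tilde u_\eps=T^{-1}u_\eps$, then uses the isometry of $T^{-1}$ together with \eqref{Q-error} for the nonlocal-to-local term and \eqref{K-eps-Galerkin-error} for the Galerkin term. Your direct Duhamel/Hilbert--Schmidt proof of the Galerkin step makes explicit a point the paper leaves implicit: an $\eps$-independent $C_T$ requires the compression $P_m\tilde\bA_\eps P_m$ to inherit the bound $Me^{\omega t}$, which is automatic when $M=1$, as in all three examples, whereas constants obtained from Theorem~\ref{thm.ss-heat} or from a naive Gronwall argument grow with the size of the kernel $\tilde\sigma_\eps$ as $\eps\to0$.
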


Thus the derivation of the PDE on $K$ may be viewed as a genuine two-step limit: first $m\to\infty$ at fixed $\eps$ (Galerkin/IPS to the lifted nonlocal equation), then $\eps\to0$ (nonlocal equation to the transported PDE), or any joint limit for which the three terms on the right-hand side of \eqref{global-error} vanish.

The last two terms in \eqref{global-error} are controlled by the approximation theory on $K$ developed
in \cite{Med2026} (in particular, in terms of generalized Lipschitz regularity
relative to the self-similar partition), while the first term is the
``nonlocal-to-local'' error on $Q$ coming from \cite{PauTre2025}.

\paragraph{Balancing the two scales.}
Suppose, in addition, that the Galerkin terms are of order $q^{\alpha m}$, where $q\in(0,1)$ is the common contraction ratio of the IFS and $\alpha>0$ is the generalized Lipschitz exponent from the approximation theory on $K$ developed in \cite{Med2026}. If the nonlocal consistency is $r(\eps)=\cO(\eps^{\beta})$, then \eqref{global-error} becomes
$$
  \|\tilde u_{\eps,m}-\tilde u\|_{C([0,T];L^2(K,\nu))}  \le C_T\big(\eps^{\beta}+q^{\alpha m}\big).
$$
Since the number of cells at level $m$ is $N_m=k^m$ and $q^m=N_m^{-1/s}$ with $s=\log k/\log(1/q)$, the network error is equivalently $N_m^{-\alpha/s}$. Hence the similarity dimension $s$ plays the role of an effective dimension in the cost/error balance. The natural balanced choice is $q^{\alpha m}\asymp \eps^{\beta}$, i.e.
$$
  N_m\asymp \eps^{-\beta s/\alpha},   \qquad\text{or equivalently}\qquad   \eps\asymp N_m^{-\alpha/(\beta s)}.
$$
We thus obtain the following corollary.

\begin{cor}\label{cor:balanced-scales}
Assume, in addition to Theorem~\ref{thm:two-parameter}, that there exist 
$\alpha>0$ and $C>0$, independent of $\eps$ and $m$, such that
$$
  \|\tilde u^0-P_m\tilde u^0\|_{L^2(K,\nu)} +
  \|\tilde\sigma_{\eps}-\tilde\sigma_{\eps,m}\|_{L^2(K\times K,\nu\times\nu)}
  \le C q^{\alpha m},
$$
and that $r(\eps)\le C\eps^{\beta}$ for some $\beta>0$. Then, for every $T>0$,
$$
  \|\tilde u_{\eps,m}-\tilde u\|_{C([0,T];L^2(K,\nu))}
  \le C_T\big(\eps^{\beta}+q^{\alpha m}\big).
$$
If $m=m(\eps)$ is chosen so that $q^{\alpha m(\eps)}\le \eps^{\beta}<q^{\alpha(m(\eps)-1)}$,
then the two contributions are balanced and
$$
  \|\tilde u_{\eps,m(\eps)}-\tilde u\|_{C([0,T];L^2(K,\nu))}
  \le C_T\eps^{\beta}.
$$
Equivalently, since $N_m=k^m$ and $q^m=N_m^{-1/s}$ with $s=\log k/\log(1/q)$, 
the balanced choice $\eps\asymp N_m^{-\alpha/(\beta s)}$ yields
$$
  \|\tilde u_{\eps,m}-\tilde u\|_{C([0,T];L^2(K,\nu))}
  \le C_T N_m^{-\alpha/s}.
$$
\end{cor}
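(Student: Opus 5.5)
The first estimate follows from Theorem~\ref{thm:two-parameter} by substitution. By \eqref{global-error}, with the two Galerkin terms bounded by $Cq^{\alpha m}$ and $r(\eps)\le C\eps^\beta$, we get
\[
\|\tilde u_{\eps,m}-\tilde u\|_{C([0,T];L^2(K,\nu))}\le C_T\bigl(C\eps^\beta\|u^0\|_X + Cq^{\alpha m}\bigr).
\]
The factors $C$ and $\|u^0\|_X$ are fixed data, so we absorb them into a new constant $C_T$. This constant still does not depend on $\eps$ or $m$. That dependence is the only point to watch: the constant $C_T$ in \eqref{global-error} comes from the uniform stability bound $Me^{\omega t}$ in Assumption~\ref{ass:Aeps-consistency} and from the Galerkin estimate \eqref{K-eps-Galerkin-error}.

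\textbf{Main obstacle: the Galerkin constant.} For the Galerkin step, the constant in \eqref{K-eps-Galerkin-error} must be uniform in $\eps$. If it came from a Gronwall argument involving $\|\tilde\bA_\eps\|_{\cL(L^2)}$, it would blow up as $\eps\to0$. I would handle this in one of two ways.
\begin{enumerate}
\item Read the hypothesis as "the constant in Theorem~\ref{thm:two-parameter} is independent of $\eps$ and $m$". This is how the corollary is phrased: "in addition to Theorem~\ref{thm:two-parameter}".
\item Re-derive \eqref{K-eps-Galerkin-error} by a Duhamel argument. Use the semigroup generated by $P_m\tilde\bA_\eps P_m$. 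This is the conjugate (through $T$) of the Galerkin projection on $Q$, and it inherits the bound $Me^{\omega t}$ when that bound is contractive-type ($M=1$, via dissipativity, since $P_m$ is an orthogonal projection).
\end{enumerate}
I would state explicitly that the constant is the one supplied by Theorem~\ref{thm:two-parameter}.

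For the balanced choice, let $m(\eps)$ satisfy $q^{\alpha m(\eps)}\le\eps^\beta<q^{\alpha(m(\eps)-1)}$. Such an integer exists and is unique because $q^{\alpha m}$ decreases strictly to $0$ and starts at $q^0=1$ (for $\eps<1$). Then $\eps^\beta+q^{\alpha m(\eps)}\le 2\eps^\beta$, which gives the bound $C_T\eps^\beta$ after enlarging $C_T$.

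For the $N_m$ form, use $q^m=k^{-m\log(1/q)/\log k}=N_m^{-1/s}$, so $q^{\alpha m}=N_m^{-\alpha/s}$. The choice $\eps\asymp N_m^{-\alpha/(\beta s)}$ means $c_1N_m^{-\alpha/s}\le\eps^\beta\le c_2N_m^{-\alpha/s}$. Substituting into the first estimate gives $\eps^\beta+q^{\alpha m}\le(c_2+1)N_m^{-\alpha/s}$, which is the claimed bound.
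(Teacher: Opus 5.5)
Your proposal is correct and follows the paper's own route: substitute the assumed bounds into \eqref{global-error}, absorb $C$ and $\|u^0\|_X$ into $C_T$, use $q^{\alpha m(\eps)}\le\eps^\beta$ for the balanced choice, and rewrite $q^{\alpha m}=N_m^{-\alpha/s}$. Your remark on the $\eps$-uniformity of the Galerkin constant goes beyond what the paper makes explicit, since the paper simply inherits $C_T$ from Theorem~\ref{thm:two-parameter}; your Duhamel argument with the compressed generator $P_m\tilde\bA_\eps P_m$ is a valid way to secure that uniformity when $M=1$.
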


Corollary~\ref{cor:balanced-scales} is the quantitative statement one can use in practice. 
It confirms the interpretation above: once the consistency order $r(\eps)$ is known, 
the estimate prescribes how the nonlocalization scale $\eps$ and the network depth $m$ 
should be coupled. 
For a target accuracy $\delta$, one chooses $\eps\asymp \delta^{1/\beta}$ 
and $N_m\asymp \delta^{-s/\alpha}$; for a fixed particle budget $N_m$, 
the balanced choice is $\eps\asymp N_m^{-\alpha/(\beta s)}$. 
Taking $\eps$ much smaller than this is wasteful, because the network error 
$q^{\alpha m}$ then dominates. 
Taking $m$ much larger is equally wasteful, because the nonlocalization error 
$\eps^{\beta}$ then dominates. 
Thus the rate is not only an asymptotic statement, 
but also a practical design rule for the approximation.

At a conceptual level, Theorem~\ref{thm:two-parameter} is the key quantitative output 
of the construction: it combines, in a single estimate, the nonlocal-to-local approximation 
on the reference space $Q$ and the Galerkin/self-similar network approximation on the  fractal side $K$. 
In the present framework, this appears to give a new constructive and quantitative 
derivation of PDE-type dynamics on fractals from explicit interacting particle systems. 
It also makes visible the role of the similarity dimension $s$: 
it enters the cost/error balance as an effective dimension. 

For the three model problems below one finds $\beta=1$ for transport and Burgers, 
and $\beta=2$ for heat, so diffusion allows a comparatively larger nonlocal scale 
$\eps$ for the same network size and target accuracy.

\section{Representative examples}\label{sec.examples}

In all three examples below we work with periodic boundary conditions on $Q=[0,1]$, i.e., we identify $Q$ with the one-dimensional torus $\T$ and periodize the kernels. The periodic setting isolates the basic mechanism and avoids boundary-layer terms.

The examples should be understood as transported models. In particular, the Burgers and
transport equations below are not meant to coincide with the intrinsic first-order
models built from Dirichlet-form vector fields; compare, for instance, the intrinsic
viscous Burgers and transport/continuity theories in \cite{HinMei20,HinSche24}. 
Their purpose is to show how standard one-dimensional PDEs give rise, after
nonlocalization, lifting and Galerkin projection, to explicit self-similar particle
systems on $K$.

\subsection{Transport equation}

Let $b\in W^{1,\infty}(\T)$. Consider the transport equation on $\T$
\begin{equation}\label{transport}
  \partial_t u + b(x)\,\partial_x u = 0,\qquad u(0)=u^0.
\end{equation}

\paragraph{Nonlocal approximation on $Q$.}
Fix an arbitrary odd function $\eta\in L^1(\R)$ such that $\int_{\R} \eta(z)\,dz = 0$ and 
$\int_{\R} z\,\eta(z)\,dz = -1$ and $C_\eta:=\frac12\int_{\R}|z|^2\,|\eta(z)|\,dz<\infty$.
Define the (periodized) kernel 
\begin{equation}\label{eta-eps-derivative}
  \eta_{\eps}(z):=\eps^{-2}\,\eta(z/\eps)
\end{equation}
and the
nonlocal approximate derivative
\begin{equation}\label{Deps}
  (D_{\eps}u)(x) := \int_{\T} \eta_{\eps}(x-y)\,u(y)\,dy = \int_{\T} \eta_{\eps}(x-y)\,(u(y)-u(x))\,dy .
\end{equation}
Then, for smooth $u$, $D_{\eps}u\to \partial_x u$ in $L^2(\T)$ (more precisely, see \eqref{Deps-consistency} in the proof of Lemma \ref{lemtransport} below).

One obtains an integral approximation of \eqref{transport} by
\begin{equation}\label{transport-eps}
  \partial_t u_{\eps} + b(x)\,D_{\eps}u_{\eps} = 0.
\end{equation}
Equivalently, \eqref{transport-eps} has the form \eqref{pdeQ-eps} with
$\bA_{\eps}u=-b\,D_{\eps}u$, i.e., $\sigma_{\eps}(x,y)=-b(x)\eta_{\eps}(x-y)$.

\begin{lemma}\label{lemtransport}
Assumption~\ref{ass:Aeps-consistency} is satisfied with $M=1$, $\omega=\tfrac12\,\|b'\|_{L^\infty}\int_{\R}|z|\,|\eta(z)|\,dz$, and $r(\eps)=\cO(\eps)$ for the choice $X=H^2(\T)$.
\end{lemma}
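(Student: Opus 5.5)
The proof has two parts, matching the two items of Assumption~\ref{ass:Aeps-consistency}: an energy estimate giving stability with $M=1$, and a Taylor expansion giving consistency. Throughout, $\cD_0=C^\infty(\T)$, which is dense in $H^2(\T)$ and is a core for $\bA=-b\,\partial_x$ on its natural domain.

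\textbf{Stability.} Since $\eta\in L^1$, the periodized $\eta_\eps$ is in $L^1(\T)$. By Young's inequality, $D_\eps$ is bounded on $L^2(\T)$, and since $b\in L^\infty$, so is $\bA_\eps=-bD_\eps$. It therefore generates a uniformly continuous semigroup. To get $M=1$, we aim for the quasi-dissipativity bound
\[
\langle \bA_\eps u,u\rangle\le\omega\|u\|_{L^2}^2 ,
\]
with $\omega$ independent of $\eps$. Once this holds, $\frac{d}{dt}\|u_\eps\|^2\le 2\omega\|u_\eps\|^2$ and Gronwall gives $\|S_\eps(t)\|\le e^{\omega t}$.

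Because $\eta$ is odd, the kernel $k(x,y)=\eta_\eps(x-y)$ is antisymmetric. Hence $\iint k(x,y)\,b(y)u(x)u(y)\,dx\,dy=0$, and we can write
\[
\langle bD_\eps u,u\rangle=\tfrac12\iint \eta_\eps(x-y)\,\big(b(x)-b(y)\big)\,u(x)u(y)\,dx\,dy .
\]
Next use $|b(x)-b(y)|\le\|b'\|_\infty|x-y|$, with $|x-y|$ the torus distance, together with $|u(x)u(y)|\le\frac12(u(x)^2+u(y)^2)$. Then apply Schur's test with the kernel $|x-y|\,|\eta_\eps(x-y)|$, whose integral in either variable is
\[
\int|z|\,\eps^{-2}|\eta(z/\eps)|\,dz=\int|z||\eta(z)|\,dz .
\]
This gives $|\langle \bA_\eps u,u\rangle|\le\frac12\|b'\|_\infty\int|z||\eta|\,\|u\|^2$, which is exactly the stated $\omega$.

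The delicate point is periodization. The torus distance is at most the real-line distance, so the Lipschitz bound on $b$ and the $L^1$ moment bound survive when the sum over periodic images is taken. If an image bookkeeping issue arises, I would handle it by working on $\R$ with $1$-periodic $u$ and $b$, and periodizing only at the end.

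\textbf{Consistency.} We need $\|\bA_\eps\varphi-\bA\varphi\|_{L^2}\le\|b\|_\infty\|D_\eps\varphi-\varphi'\|_{L^2}$ to be bounded by $C\eps\|\varphi\|_{H^2}$. Substitute $y=x-\eps z$ and use the moment conditions $\int\eta=0$ and $\int z\eta=-1$:
\[
D_\eps\varphi(x)-\varphi'(x)=\eps^{-1}\int\eta(z)\big[\varphi(x-\eps z)-\varphi(x)+\eps z\varphi'(x)\big]\,dz .
\]
Taylor's formula with integral remainder gives
\[
\varphi(x-\eps z)-\varphi(x)+\eps z\varphi'(x)=\eps^2z^2\int_0^1(1-s)\,\varphi''(x-s\eps z)\,ds .
\]
Minkowski's integral inequality and translation invariance of the $L^2(\T)$ norm then yield
\[
\|D_\eps\varphi-\varphi'\|_{L^2}\le \eps\,C_\eta\,\|\varphi''\|_{L^2},
\]
which is the estimate referred to as \eqref{Deps-consistency}. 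Hence $r(\eps)=C_\eta\|b\|_\infty\,\eps$, and by density this holds on all of $\cD_0$.

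I expect the main obstacle to be the stability step, specifically obtaining an $\eps$-independent $\omega$. A naive bound gives only $\|\bA_\eps\|\sim\eps^{-1}$. The antisymmetrization identity is what converts this into a bound involving a difference of $b$, and that difference supplies a factor $|x-y|$ which cancels the $\eps^{-1}$ through the scaling of the first absolute moment.
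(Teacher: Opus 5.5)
Your proof is essentially the paper's. Your antisymmetrized expression $\langle bD_\eps u,u\rangle=\tfrac12\iint\eta_\eps(x-y)\,(b(x)-b(y))\,u(x)u(y)\,dx\,dy$ is exactly the paper's identity $\langle bD_\eps u,u\rangle=\tfrac12\langle[b,D_\eps]u,u\rangle$, with commutator kernel $(b(x)-b(y))\eta_\eps(x-y)$. It is followed by the same Schur-type bound, which gives the same $\omega$, and your consistency step is the same Taylor/Minkowski argument that leads to \eqref{Deps-consistency}.

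One justification step is false and should be fixed. The integral $\iint\eta_\eps(x-y)\,b(y)u(x)u(y)\,dx\,dy$ does not vanish in general: exchanging $x$ and $y$ shows it equals $-\langle bD_\eps u,u\rangle$. What antisymmetry of the kernel kills is the integral against the symmetric weight $(b(x)+b(y))u(x)u(y)$, and that is what produces your displayed identity with the factor $\tfrac12$. Taken literally, your stated reason would give the identity without the $\tfrac12$, and hence twice the claimed $\omega$. Your final formula is nevertheless correct.

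You also use $\int_{\R}|z|\,|\eta(z)|\,dz<\infty$ without comment. It follows from $\eta\in L^1(\R)$ and $C_\eta<\infty$ by splitting the integral at $|z|=1$, as the paper notes.
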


\begin{proof}
Here $\bA u=-b\,\partial_x u$ on $L^2(\T)$, with $\cD(\bA)=H^1(\T)$ and a core
$\cD_0=C^{\infty}(\T)$. We take $X=H^2(\T)$.

\smallskip
\noindent\emph{Uniform semigroup stability.}
Although $\|D_\eps\|_{\cL(L^2)}=\|\eta_\eps\|_{L^1}=\eps^{-1}\|\eta\|_{L^1}$ blows up
as $\eps\to0$, the $L^2$-flow of \eqref{transport-eps} is uniformly stable. Since
$\eta_\eps$ is odd, $D_\eps$ is skew-adjoint on $L^2(\T)$. 
Hence, setting $[b,D_\eps]:=bD_\eps-D_\eps(b\cdot)$, for a smooth
solution $u_\eps$ we have
$$
  \langle [b,D_{\eps}]u_{\eps},u_{\eps}\rangle
  = \langle bD_{\eps}u_{\eps},u_{\eps}\rangle-\langle D_{\eps}(bu_{\eps}),u_{\eps}\rangle
  = \langle bD_{\eps}u_{\eps},u_{\eps}\rangle +\langle bu_{\eps},D_{\eps}u_{\eps}\rangle
  =2\langle bD_{\eps}u_{\eps},u_{\eps}\rangle,
$$
and thus
$$
  \frac12\frac{d}{dt}\|u_\eps(t)\|_{L^2(\T)}^2
  =-\langle b D_\eps u_\eps, u_\eps\rangle
  =-\tfrac12\langle [b,D_\eps]u_\eps, u_\eps\rangle,
$$
where $[b,D_\eps]$ has the integral kernel representation
$$
  ([b,D_\eps]u)(x)=\int_{\T} K_{\eps}(x,y)\,u(y)\,dy, \qquad K_{\eps}(x,y):=(b(x)-b(y))\,\eta_{\eps}(x-y).
$$
Since $b\in W^{1,\infty}(\T)$, one has $|b(x)-b(y)|\le \|b'\|_{L^{\infty}(\T)}|x-y|_{\T}$, where $|x-y|_{\T}$ denotes the periodic distance. Therefore
$$
  \sup_x\int_{\T}|K_{\eps}(x,y)|\,dy
  \le \|b'\|_{L^{\infty}(\T)}\int_{\T}|x-y|_{\T}|\eta_{\eps}(x-y)|\,dy
  \le \|b'\|_{L^{\infty}(\T)}\int_{\R}|z|\,|\eta(z)|\,dz,
$$
after the change of variables $x-y=\eps z$. The same estimate holds for $\sup_y\int_{\T}|K_{\eps}(x,y)|\,dx$. Schur's test then gives
\begin{align*}
  \|[b,D_{\eps}]\|_{\cL(L^2(\T))}
  &\le \Big(\sup_x\int|K_{\eps}(x,y)|\,dy\Big)^{1/2} \Big(\sup_y\int|K_{\eps}(x,y)|\,dx\Big)^{1/2} \\
  &\le \|b'\|_{L^\infty(\T)}\int_{\R}|z|\,|\eta(z)|\,dz.
\end{align*}
Consequently,
$$
  \|u_\eps(t)\|_{L^2(\T)}\le \exp\Big(\frac{t}{2}\,\|b'\|_{L^\infty}\int_{\R}|z|\,|\eta(z)|\,dz\Big)\,\|u^0\|_{L^2(\T)},
$$
which is Assumption~\ref{ass:Aeps-consistency}(1) with $M=1$ and $\omega=\tfrac12\,\|b'\|_{L^\infty}\int_{\R}|z|\,|\eta(z)|\,dz$.
Note that $\int_{\R}|z|\,|\eta(z)|\,dz<+\infty$ by the assumptions done on $\eta$. Indeed, split the integral in two parts, $\vert z\vert\leq 1$ and $\vert z\vert>1$; the first is finite because $\eta\in L^1(\R)$, and the second is less than $C_\eta$ because $\vert z\vert\leq\vert z\vert^2$.

\smallskip
\noindent\emph{Consistency.}
After periodization and the change of variables $x-y=\eps z$, one has
$(D_{\eps}u)(x)=\frac1\eps\int_{\R}\eta(z)u(x-\eps z)\,dz$.
Hence
$$
  D_{\eps}u(x)-\partial_xu(x) = \frac1\eps\int_{\R}\eta(z)\Big(u(x-\eps z)-u(x)+\eps z\,\partial_xu(x)\Big)\,dz.
$$
Taylor's formula gives
$$
  u(x-\eps z)-u(x)+\eps z\,\partial_xu(x) = \eps^2 z^2\int_0^1(1-\theta)\,\partial_{xx}u(x-\theta\eps z)\,d\theta.
$$
Therefore, for $u\in H^2(\T)$,
\begin{equation}\label{Deps-consistency}
  \|D_{\eps}u-\partial_xu\|_{L^2(\T)} \le C_\eta\,\eps\,\|\partial_{xx}u\|_{L^2(\T)},
\end{equation}
by Minkowski's inequality and translation invariance of the $L^2(\T)$ norm.

It follows from \eqref{Deps-consistency} that
$\|\bA_\eps u-\bA u\|_{L^2(\T)}\le C_\eta\,\|b\|_{L^\infty}\,\|u\|_{H^2(\T)}\,\eps$.
In particular, Assumption~\ref{ass:Aeps-consistency}(2) holds with $r(\eps)=\cO(\eps)$ for the
choice $X=H^2(\T)$.
\end{proof}

\paragraph{Lift to $K$ and IPS.}
Define $\tilde b:=T^{-1}b\in L^\infty(K,\nu)$ and the lifted kernel
$\tilde\sigma_{\eps}$ by \eqref{sigma-lift-explicit}. The lifted equation is
$$
  \partial_t\tilde u_{\eps}(t,x) + \tilde b(x)\int_K \eta_{\eps}\big(\Phi^{-1}(x)-\Phi^{-1}(y)\big)\,\tilde u_{\eps}(t,y)\,d\nu(y)=0.
$$
The Galerkin scheme yields the particle system
\begin{equation}\label{transport-IPS}
  \dot u^{\eps}_w(t) = -\sum_{|v|=m} b_w\,\eta^{(\eps)}_{wv}\,u^{\eps}_v(t)\,\nu(K_v),
  \qquad b_w:=\fint_{K_w}\tilde b\,d\nu
  = \fint_{Q_w}b\,d\lambda
\end{equation}
with weights
$$
\eta^{(\eps)}_{wv}:=\fint_{K_w\times
  K_v}\eta_{\eps}(\Phi^{-1}(x)-\Phi^{-1}(y))\,d(\nu\times\nu)=
\fint_{Q_w\times
  Q_v}\eta_{\eps}(x-y)\,d(\lambda\times\lambda).
  $$

Let us note a few structural features of \eqref{transport-IPS}. 
First, the unknowns $u_w^{\eps}$ are attached to self-similar cells, 
so the transport dynamics is realized on a hierarchical network rather than on a uniform grid. 
Second, because $\int_{\T}\eta_{\eps}=0$, one has $\bA_{\eps}\1=0$; 
correspondingly, the averaged coefficients satisfy
$$
  \sum_{|v|=m}\eta^{(\eps)}_{wv}\,\nu(K_v)=0,
$$
so constant states are preserved, exactly as for the transport equation. 
Third, the coefficients may have both signs, which is the discrete signature of transport 
rather than diffusion. 
Finally, the scale $\eps$ determines how strongly two cells interact after they are 
compared through the coordinate $\Phi^{-1}$ on $Q$: 
for localized kernels, only cells that are close in the $Q$-parameter interact significantly. 
Thus \eqref{transport-IPS} provides a concrete transport dynamics on a multiscale 
self-similar network, and this seems to be one of the most interesting new features 
of the present approach.

As $\eps\to 0$ with $m$ fixed, the averaged weights $\eta^{(\eps)}_{wv}$ concentrate: since $\eta_\eps$ approximates a distributional derivative, the IPS \eqref{transport-IPS} formally reduces to the Galerkin projection of the local transport equation $\partial_t u + b\,\partial_x u = 0$ onto the level-$m$ partition of $Q$, transported to $K$. In the opposite regime $m\to\infty$ with $\eps$ fixed, the nonlocal structure is preserved but the spatial resolution on $K$ refines. The joint limit $\eps\to 0$, $m\to\infty$ is governed by Theorem~\ref{thm:two-parameter} and Corollary~\ref{cor:balanced-scales}, which prescribe the balanced coupling $\eps\sim k^{-m}$.

\subsection{Burgers equation}
Consider the (inviscid) Burgers equation on $\T$
$$
  \partial_t u + \partial_x\Big(\tfrac12 u^2\Big)=0.
$$
Since solutions may develop shocks, the approximation theory depends on whether
one works with smooth solutions on short times, or with entropy solutions.
Here, a robust strategy is to treat the viscous Burgers
regularization and then discuss the inviscid limit.

\paragraph{A nonlocal conservative flux approximation.}
Let $F(s)=\tfrac12 s^2$ and use the same odd kernel $\eta_{\eps}$ as in
\eqref{Deps}. 
As in the transport case, we use the derivative scaling \eqref{eta-eps-derivative} so that
$D_\eps$ approximates $\partial_x$ (the overall sign can be adjusted by replacing $\eta$
with $-\eta$ if needed).
Consider the integral approximation
$$
  \partial_t u_{\eps} = \bA_{\eps}(u_\eps)
$$
where (recall that $\int_{\T}\eta_\eps=0$)
$$
  (\bA_\eps(u))(x):=-D_\eps(F(u)) = \int_{\T}\eta_\eps(x-y)\big(F(u(x))-F(u(y))\big)\,dy,
$$
which fits \eqref{Aeps-nonlinear} with
$W_{\eps}(x,y)=\eta_{\eps}(x-y)$ and $D(a,b)=F(a)-F(b)$.

\begin{lemma}\label{lem:burgers-smooth-regime}
Fix $R>0$. There exists $C_R>0$, independent of $\eps$, such that for every $u\in H^2(\T)\cap L^\infty(\T)$ with $\|u\|_{H^2(\T)}+\|u\|_{L^\infty(\T)}\le R$,
\begin{equation}\label{burgers-consistency-estimate}
  \|\bA_{\eps}(u)+\partial_xF(u)\|_{L^2(\T)}\le C_R\eps.
\end{equation}
Moreover, if $u_{\eps}$ and $v_{\eps}$ solve $\partial_tu=\bA_{\eps}(u)$ on $[0,T]$ and remain bounded by $R$ in $H^2(\T)$ on this interval, then
\begin{equation}\label{burgers-stability-estimate}
  \|u_{\eps}(t)-v_{\eps}(t)\|_{L^2(\T)}\le e^{C_R t}\|u_{\eps}(0)-v_{\eps}(0)\|_{L^2(\T)},
  \qquad t\in[0,T].
\end{equation}
Consequently, in any smooth regime where the local Burgers solution and the nonlocal approximants remain uniformly bounded in $H^2(\T)\cap L^\infty(\T)$ on $[0,T]$, the nonlinear analogue of Assumption~\ref{ass:Aeps-consistency} is satisfied with rate $r(\eps)=\cO(\eps)$.
\end{lemma}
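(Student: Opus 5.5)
The plan is to treat the two estimates separately. Both are reduced to the linear tools already used in the proof of Lemma~\ref{lemtransport}: the consistency bound \eqref{Deps-consistency} and the Schur bound on the commutator $[b,D_\eps]$. The whole argument rests on the one-dimensional embedding $H^1(\T)\hookrightarrow L^\infty(\T)$, which gives $H^2(\T)\hookrightarrow W^{1,\infty}(\T)$. Since $\int_\T\eta_\eps=0$, I will use $\bA_\eps(u)=-D_\eps(F(u))$ throughout.

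\emph{Consistency.} Write $\bA_\eps(u)+\partial_xF(u)=-(D_\eps-\partial_x)F(u)$ and apply \eqref{Deps-consistency} to $F(u)=\tfrac12u^2$. This gives
$$\|\bA_\eps(u)+\partial_xF(u)\|_{L^2(\T)}\le C_\eta\,\eps\,\|\partial_{xx}F(u)\|_{L^2(\T)} .$$
Next expand $\partial_{xx}F(u)=u\,u_{xx}+u_x^2$. The first term is bounded by $\|u\|_{L^\infty}\|u\|_{H^2}\le R^2$. For the second, $\|u_x^2\|_{L^2}\le\|u_x\|_{L^\infty}\|u_x\|_{L^2}\le C_S R^2$, using $\|u_x\|_{L^\infty}\le C_S\|u\|_{H^2}$. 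This yields \eqref{burgers-consistency-estimate} with $C_R=C_\eta(1+C_S)R^2$.

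\emph{Stability.} Set $w=u_\eps-v_\eps$ and $a=\tfrac12(u_\eps+v_\eps)$, so that $F(u_\eps)-F(v_\eps)=a\,w$ and $\partial_tw=-D_\eps(aw)$. For fixed $\eps$ the operator $D_\eps$ is bounded, and the solutions are assumed bounded in $H^2$, so the energy identity is justified. Using that $D_\eps$ is skew-adjoint (as $\eta_\eps$ is odd), I will repeat the commutator computation from Lemma~\ref{lemtransport} with $b$ replaced by $a(t,\cdot)$:
$$\tfrac12\tfrac{d}{dt}\|w\|_{L^2}^2=-\langle D_\eps(aw),w\rangle=\langle a\,D_\eps w,w\rangle=\tfrac12\langle[a,D_\eps]w,w\rangle .$$
Schur's test, applied to the kernel $(a(x)-a(y))\eta_\eps(x-y)$, bounds $\|[a,D_\eps]\|_{\cL(L^2)}$ by $\|a_x\|_{L^\infty}\int_\R|z||\eta(z)|\,dz$, uniformly in $\eps$. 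The $H^2$ bounds on $u_\eps,v_\eps$ give $\|a_x\|_{L^\infty}\le C_SR$, hence
$$\tfrac{d}{dt}\|w\|_{L^2}^2\le 2C_R\|w\|_{L^2}^2 ,$$
and Gronwall gives \eqref{burgers-stability-estimate}.

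\emph{Consequence.} Let $u$ be the smooth Burgers solution and set $e=u_\eps-u$. Then
$$\partial_te=\big(\bA_\eps(u_\eps)-\bA_\eps(u)\big)+\big(\bA_\eps(u)+\partial_xF(u)\big).$$
The first bracket has exactly the structure treated in the stability step, now with $a=\tfrac12(u_\eps+u)$. The second is a forcing term of size at most $C_R\eps$ by \eqref{burgers-consistency-estimate}. The energy inequality becomes
$$\tfrac{d}{dt}\|e\|_{L^2}\le C_R\|e\|_{L^2}+C_R\eps ,$$
and Gronwall gives $\|e\|_{C([0,T];L^2)}\le e^{C_RT}\|e(0)\|_{L^2}+C_R'\eps$. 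This is the nonlinear analogue of \eqref{Q-error}, with $r(\eps)=\cO(\eps)$.

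The only delicate point is that all constants must stay independent of $\eps$. This holds because the blow-up $\|D_\eps\|_{\cL(L^2)}\sim\eps^{-1}$ never enters: it is always absorbed into the commutator, whose bound needs only the Lipschitz norm of $a$. That Lipschitz control is exactly what the assumed $H^2$ bounds provide, which is why the lemma is restricted to the smooth regime.
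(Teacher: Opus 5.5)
Your proposal is correct and follows the paper's proof essentially line by line: consistency comes from \eqref{Deps-consistency} applied to $F(u)$ with $\partial_{xx}F(u)=u\,u_{xx}+u_x^2$ (you bound $u_x^2$ by $\|u_x\|_{L^\infty}\|u_x\|_{L^2}$ where the paper uses $\|u_x\|_{L^4}^2$), and stability comes from the skew-adjointness of $D_\eps$, the commutator $[a,D_\eps]$, Schur's test and Gronwall. You also go slightly beyond the paper by proving the final ``consequently'' claim with a Gronwall argument that treats the consistency defect as a forcing term, which the paper only asserts, and your sign in the energy identity is the correct one (the paper has a harmless sign slip there).
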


\begin{proof}
For the consistency estimate, apply \eqref{Deps-consistency} to $F(u)$:
$$
  \|\bA_{\eps}(u)+\partial_xF(u)\|_{L^2(\T)}
  =\|\big(D_{\eps}-\partial_x\big)(F(u))\|_{L^2(\T)}
  \le C_{\eta}\eps\|\partial_{xx}F(u)\|_{L^2(\T)}.
$$
Since $F(s)=\tfrac12 s^2$, one has $\partial_{xx}F(u)=u\,\partial_{xx}u+(\partial_xu)^2$, 
and therefore
$$
  \|\partial_{xx}F(u)\|_{L^2}
  \le \|u\|_{L^\infty}\|u\|_{H^2}+\|\partial_xu\|_{L^4}^2
  \le C\big(1+\|u\|_{L^\infty}\big)\|u\|_{H^2}^2  \le C_R,
$$
using the 1D embeddings $H^1(\T)\hookrightarrow L^\infty(\T)\cap L^4(\T)$. 
This proves \eqref{burgers-consistency-estimate}.

For stability, let $w = u_{\eps} - v_{\eps}$ and $b = \tfrac{1}{2}(u_{\eps} + v_{\eps})$. 
Since $F(u_{\eps}) - F(v_{\eps}) = bw$, the difference equation reads
$\partial_t w + D_{\eps}(bw) = 0$.
Using again the skew-adjointness of $D_{\eps}$, we obtain as in the transport case
$$
  \frac12\frac{d}{dt}\|w(t)\|_{L^2(\T)}^2
  =\langle D_{\eps}(bw),w\rangle
  =-\frac12\langle [b,D_{\eps}]w,w\rangle.
$$
Since $u_{\eps}$ and $v_{\eps}$ are bounded in $H^2(\T)$, the Sobolev embedding 
$H^2(\T)\hookrightarrow W^{1,\infty}(\T)$ yields $\|b'\|_{L^\infty(\T)}\le C_R$. 
The same Schur estimate as in the proof of Lemma \ref{lemtransport} therefore gives 
$\|[b,D_{\eps}]\|_{\cL(L^2(\T))}\le C_R$,
with $C_R$ independent of $\eps$. Hence
$$
  \frac{d}{dt}\|w(t)\|_{L^2(\T)}^2\le C_R\|w(t)\|_{L^2(\T)}^2,
$$
and Gronwall's lemma yields \eqref{burgers-stability-estimate}.
\end{proof}

\paragraph{Lift to $K$ and Galerkin/IPS discretization.}

The corresponding lifted equation on $K$ is
\begin{equation}\label{burgers-eps-K}
  \partial_t\tilde u_{\eps}(t,x)
  + \int_K \eta_{\eps}\big(\Phi^{-1}(x)-\Phi^{-1}(y)\big)\Big(F(\tilde u_{\eps}(t,x))-F(\tilde u_{\eps}(t,y))\Big)\,d\nu(y)=0.
\end{equation}

The particle approximation of \eqref{burgers-eps-K} is
$$
  \dot u^{\eps}_w(t)
  = -\sum_{|v|=m} \eta^{(\eps)}_{wv}\,\Big(F(u^{\eps}_w(t))-F(u^{\eps}_v(t))\Big)\,\nu(K_v),
$$
where $\eta^{(\eps)}_{wv}$ are the same averaged weights as in \eqref{transport-IPS}.

There is also a natural structural interpretation of this Burgers IPS. 
Because the averaged weights inherit the antisymmetry of the odd kernel, 
namely $\eta^{(\eps)}_{wv}=-\eta^{(\eps)}_{vw}$, the scheme is conservative: 
summing the equations over $w$ shows that the discrete mass 
$\sum_{|w|=m}u_w^{\eps}(t)\nu(K_w)$ 
is preserved. 
The nonlinearity acts only through pairwise flux differences $F(u_w^{\eps})-F(u_v^{\eps})$, 
so the model is the hierarchical analogue of a conservative finite-volume discretization. 
In particular, \eqref{burgers-eps-K} and its IPS discretization show that even a 
nonlinear conservation law can be transported to a fractal domain and 
approximated there by an explicit self-similar particle system.

The same limiting regimes as for the transport equation apply: 
as $\eps\to 0$ with $m$ fixed, the nonlocal flux converges formally to the local one, 
while $m\to\infty$ refines the network. The balanced coupling is again 
$\eps\sim k^{-m}$, since the smooth-regime consistency rate is $r(\eps)=\cO(\eps)$.

\paragraph{Entropy regime: two practical routes.}
For inviscid Burgers, the central difficulty is not the particle approximation itself, 
but the selection of the entropy solution after shocks form. 
A first route is therefore to insert a small viscosity and to work with
$$
  \partial_tu+\partial_xF(u)=\kappa\,\partial_{xx}u,  \qquad \kappa>0.
$$
For each fixed $\kappa$, the equation is parabolic, so the smooth-regime estimates above, 
together with the heat approximation of Section~\ref{sec:heat}, apply on finite time intervals. 
One may then justify the IPS approximation for the viscous problem first, 
and only afterwards let $\kappa\to0$. 
By the classical vanishing-viscosity principle, this limit is the entropy solution of Burgers
(see, for instance, \cite{Dafermos2016}). 
The advantage of this route is that entropy selection is delegated to a 
well-understood parabolic regularization.

A second route is to build entropy selection directly into the nonlocal scheme. 
One replaces the centered odd kernel by a one-sided nonnegative kernel and a 
monotone two-point flux. 
Let $\rho\ge 0$ be supported in $[0,\infty)$, with $\int_0^{\infty}\rho(z)\,dz=1$ 
and $\int_0^{\infty}z\rho(z)\,dz=1$, and set $\rho_{\eps}(z):=\eps^{-2}\rho(z/\eps)$. 
For Burgers, define
$$
  g(a,b):=\frac12(a_+)^2+\frac12(b_-)^2,
  \qquad a_+:=\max\{a,0\},\qquad b_-:=\min\{b,0\}.
$$
Then one may consider the one-sided nonlocal operator
$$
  (\cN_{\eps}^{\rm up}u)(x):=\int_0^{\infty}\rho_{\eps}(z)\Big(g(u(x),u(x+z))-g(u(x-z),u(x))\Big)\,dz,
$$
with shifts understood modulo $1$ on $\T$. 
For smooth $u$, a first-order Taylor expansion shows that 
$\cN_{\eps}^{\rm up}(u)\to\partial_xF(u)$. 
After transport to $K$ and Galerkin projection, this yields an IPS with oriented nonnegative 
interaction weights, that is, a nonlocal analogue of an upwind finite-volume scheme. 
This is precisely the structure in which one expects discrete maximum principles, 
$L^1$-contraction, and convergence to the entropy solution; 
compare with the classical role of monotone schemes in 
scalar conservation laws \cite{CrandallMajda1980}.

\subsection{Heat equation}\label{sec:heat}
Let $\kappa>0$. Consider the heat equation on $\T$
$$
  \partial_t u = \kappa\,\partial_{xx}u.
$$

\paragraph{Nonlocal diffusion kernel.}
Let $\rho\ge0$ be an \emph{even} function in $L^1(\R)$ with
$\int_{\R}\rho(z)\,dz=1$ and finite fourth moment $m_4:=\int_{\R}z^4\rho(z)\,dz<\infty$. 
Then $m_2:=\int_{\R} z^2\rho(z)\,dz<\infty$. Set
$\rho_{\eps}(z):=\eps^{-1}\rho(z/\eps)$ (periodized on $\T$) and define
\begin{equation}\label{heat-eps}
  \partial_t u_{\eps}(t,x) = \frac{2\kappa}{m_2\,\eps^2}\int_{\T}\rho_{\eps}(x-y)\big(u_{\eps}(t,y)-u_{\eps}(t,x)\big)\,dy.
\end{equation}
Equation \eqref{heat-eps} is of the form
\eqref{Aeps-nonlinear} with $D(a,b)=b-a$ and
$W_{\eps}(x,y)=\frac{2\kappa}{m_2\,\eps^2}\rho_{\eps}(x-y)$, and
\begin{equation}\label{Aeps_heat-eps}
  (\bA_{\eps}u)(x)=\frac{2\kappa}{m_2\eps^2}\int_{\T}\rho_{\eps}(x-y)\big(u(y)-u(x)\big)\,dy,
\end{equation}
For smooth $u$, the right-hand side of \eqref{heat-eps}
converges to $\kappa\,\partial_{xx}u$ as $\eps\to0$
(see the last part of the lemma below).

\begin{lemma}\label{lem:heat}
Assumption~\ref{ass:Aeps-consistency} is satisfied with $M=1$, $\omega=0$ 
and $r(\eps)=\cO(\eps^2)$ for the choice $X=H^4(\T)$.
\end{lemma}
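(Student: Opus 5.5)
We follow the same two-part pattern as the proof of Lemma~\ref{lemtransport}: first uniform stability, then consistency on the core. The setting is $\bA u=\kappa\,\partial_{xx}u$ on $L^2(\T)$, with $\cD(\bA)=H^2(\T)$, core $\cD_0=C^\infty(\T)$, and regularity space $X=H^4(\T)$. The operator $\bA_\eps$ in \eqref{Aeps_heat-eps} is bounded on $L^2(\T)$, with $\|\bA_\eps\|\le 4\kappa/(m_2\eps^2)$ because $\|\rho_\eps\|_{L^1}=1$. Hence it generates a uniformly continuous semigroup, and only the uniform bound remains to be shown.

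\emph{Stability.} Since $\rho$ is even, the kernel $\rho_\eps(x-y)$ is symmetric. Symmetrizing in $(x,y)$ gives
\[
\langle \bA_\eps u,u\rangle=-\frac{\kappa}{m_2\eps^2}\int_{\T}\int_{\T}\rho_\eps(x-y)\,\big(u(y)-u(x)\big)^2\,dy\,dx\le 0,
\]
where we used $\rho\ge0$. Thus $\bA_\eps$ is self-adjoint and dissipative, so $\frac{d}{dt}\|u_\eps\|^2_{L^2}\le0$. Equivalently, by Lumer--Phillips, $\|S_\eps(t)\|\le1$, which gives $M=1$ and $\omega=0$, uniformly in $\eps$. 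An alternative route is via Fourier multipliers: the symbol is $\frac{2\kappa}{m_2\eps^2}(\hat\rho(\eps\xi)-1)\le 0$ because $|\hat\rho|\le\hat\rho(0)=1$.

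\emph{Consistency.} We change variables $x-y=\eps z$, after periodization, to write $\bA_\eps u(x)=\frac{2\kappa}{m_2\eps^2}\int_{\R}\rho(z)\big(u(x-\eps z)-u(x)\big)\,dz$. We Taylor expand to fourth order with integral remainder:
\[
u(x-\eps z)-u(x)=-\eps z u'+\tfrac{\eps^2z^2}{2}u''-\tfrac{\eps^3z^3}{6}u'''+\tfrac{\eps^4z^4}{6}\int_0^1(1-\theta)^3u''''(x-\theta\eps z)\,d\theta .
\]
The first- and third-order terms vanish because $\rho$ is even and has finite moments. The second-order term yields exactly $\kappa u''$ thanks to the normalization $2/m_2$. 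The remainder is $\frac{\kappa\eps^2}{3m_2}\int\rho(z)z^4\int_0^1(1-\theta)^3u''''(x-\theta\eps z)\,d\theta\,dz$. Minkowski's integral inequality together with translation invariance of the $L^2(\T)$ norm then gives
\[
\|\bA_\eps u-\kappa u''\|_{L^2}\le \frac{\kappa m_4}{12\,m_2}\,\eps^2\,\|u''''\|_{L^2}\le C\eps^2\|u\|_{H^4},
\]
so $r(\eps)=\cO(\eps^2)$.

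The main obstacle is mild. One must check that the periodization of $\rho_\eps$ is legitimate, namely that integrating the periodized kernel over $\T$ equals integrating $\rho$ over $\R$ against a periodic function. One must also check that the odd-moment cancellation holds; this requires finiteness of $\int|z|^3\rho$, which follows from $m_4<\infty$ and $\rho\in L^1$ exactly as in the transport proof. Everything else is routine.
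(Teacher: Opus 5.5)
Your proposal is correct and follows essentially the same route as the paper: symmetrizing the even, nonnegative kernel gives $\langle \bA_\eps u,u\rangle\le 0$, hence a contraction semigroup with $M=1$, $\omega=0$; and a fourth-order Taylor expansion with integral remainder, combined with Minkowski's inequality and translation invariance, gives the same bound $\frac{\kappa m_4}{12m_2}\eps^2\|\partial_{xxxx}u\|_{L^2(\T)}$. Your extra remarks, namely the explicit bound $\|\bA_\eps\|\le 4\kappa/(m_2\eps^2)$ and the Fourier-multiplier alternative for stability, are also correct but not needed.
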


\begin{proof}
Here $\bA=\kappa\,\partial_{xx}$ on $L^2(\T)$ with $\cD(\bA)=H^2(\T)$ and core
$\cD_0=C^\infty(\T)$. For the consistency estimate we choose $X=H^4(\T)$:

\smallskip
\noindent\emph{Uniform semigroup stability.}
Although $\|\bA_\eps\|_{\cL(L^2)}\sim \eps^{-2}$ because of the prefactor in
\eqref{Aeps_heat-eps}, the operator $\bA_\eps$ is symmetric and dissipative on $L^2(\T)$:
$$
  \langle \bA_\eps u,u\rangle_{L^2(\T)}
  =-\frac{\kappa}{m_2\,\eps^2}\iint_{\T\times\T}\rho_\eps(x-y)\,|u(y)-u(x)|^2\,dx\,dy\le 0.
$$
Hence $\bA_\eps$ generates a contraction semigroup on $L^2(\T)$, uniformly in $\eps$, i.e., Assumption~\ref{ass:Aeps-consistency}(1) holds with $M=1$ and $\omega=0$.

\smallskip
\noindent\emph{Consistency.}
After periodization and the change of variables $x-y=\eps z$, one has
$$
  (\bA_{\eps}u)(x)=\frac{2\kappa}{m_2\eps^2}\int_{\R}\rho(z)\big(u(x-\eps z)-u(x)\big)\,dz.
$$
By Taylor's formula with integral remainder, one has
\begin{multline*}
  u(x-\eps z)-u(x)
  =-\eps z\,\partial_xu(x)+\frac{\eps^2 z^2}{2}\partial_{xx}u(x)-\frac{\eps^3 z^3}{6}\partial_{xxx}u(x) \\
   +\frac{\eps^4 z^4}{6}\int_0^1(1-\theta)^3\partial_{xxxx}u(x-\theta\eps z)\,d\theta.
\end{multline*}
Since $\rho$ is even, the odd moments vanish, while the second moment equals $m_2$. Therefore
$$
  \bA_{\eps}u(x)-\kappa\,\partial_{xx}u(x)
  =\frac{\kappa\eps^2}{3m_2}\int_{\R}z^4\rho(z)\int_0^1(1-\theta)^3\partial_{xxxx}u(x-\theta\eps z)\,d\theta\,dz.
$$
Now use Minkowski's integral inequality together with the translation invariance of the $L^2(\T)$-norm, we get
\begin{align*}
  \|\bA_{\eps}u-\kappa\,\partial_{xx}u\|_{L^2(\T)}
  &\le \frac{\kappa\eps^2}{3m_2}\int_{\R}z^4\rho(z)\int_0^1(1-\theta)^3\|\partial_{xxxx}u(\cdot-\theta\eps z)\|_{L^2(\T)}\,d\theta\,dz \\
  &= \frac{\kappa\eps^2}{3m_2}\Big(\int_{\R}z^4\rho(z)\,dz\Big)\Big(\int_0^1(1-\theta)^3\,d\theta\Big)\|\partial_{xxxx}u\|_{L^2(\T)} \\
  &= \frac{\kappa m_4}{12m_2}\,\eps^2\,\|\partial_{xxxx}u\|_{L^2(\T)}.
\end{align*}
Hence Assumption~\ref{ass:Aeps-consistency}(2) holds with $r(\eps)=\cO(\eps^2)$ and $X=H^4(\T)$.
\end{proof}

\paragraph{Lift to $K$ and IPS.}
The lifted equation on $K$ is
$$
  \partial_t\tilde u_{\eps}(t,x) = \frac{2\kappa}{m_2\,\eps^2}\int_K \rho_{\eps}\big(\Phi^{-1}(x)-\Phi^{-1}(y)\big)\big(\tilde u_{\eps}(t,y)-\tilde u_{\eps}(t,x)\big)\,d\nu(y),
$$
and its Galerkin/IPS approximation reads
\begin{equation}\label{heat-IPS}
  \dot u^{\eps}_w(t)
  = \sum_{|v|=m} \rho^{(\eps)}_{wv}\,\big(u^{\eps}_v(t)-u^{\eps}_w(t)\big)\,\nu(K_v),
\end{equation}
where
$\rho^{(\eps)}_{wv}:=\frac{2\kappa}{m_2\,\eps^2}\fint_{K_w\times K_v}\rho_{\eps}(\Phi^{-1}(x)-\Phi^{-1}(y))\,d(\nu\times\nu)=\frac{2\kappa}{m_2\,\eps^2}\fint_{Q_w\times Q_v}\rho_{\eps}(x-y)\,d(\lambda\times\lambda)$.

The structure of \eqref{heat-IPS} is especially transparent. 
Since $\rho$ is even and nonnegative, the coefficients satisfy $\rho^{(\eps)}_{wv}=\rho^{(\eps)}_{vw}\ge0$. Hence \eqref{heat-IPS} is a \emph{weighted graph Laplacian} on the level-$m$ 
self-similar network. In particular, it preserves the discrete mass 
$\sum_{|w|=m}u_w^{\eps}(t)\nu(K_w)$ 
and dissipates the discrete $L^2$-energy:
$$
  \frac12\frac{d}{dt}\sum_{|w|=m}|u_w^{\eps}(t)|^2\nu(K_w)
  =-\frac12\sum_{|w|,|v|=m}\rho^{(\eps)}_{wv}|u_v^{\eps}(t)-u_w^{\eps}(t)|^2\nu(K_w)\nu(K_v)\le0.
$$
Thus the IPS has the qualitative structure one expects from diffusion: 
symmetric couplings, nonnegative rates, and relaxation toward local equilibrium. 
In the present setting the important point is that these coefficients are not imposed by hand; 
they are obtained by transporting a classical nonlocal diffusion kernel through the IFS coding 
and then averaging it on the self-similar cells.

As $\eps\to 0$ with $m$ fixed, the weights $\rho^{(\eps)}_{wv}$ diverge (reflecting the $\eps^{-2}$ prefactor) but concentrate on pairs of adjacent cells in $Q$; the IPS \eqref{heat-IPS} then formally approaches the Galerkin discretization of $\kappa\,\partial_{xx}u$ on the level-$m$ partition. In the joint limit, Theorem~\ref{thm:two-parameter} with $r(\eps)=\cO(\eps^2)$ prescribes the balanced scaling $\eps\sim k^{-m/2}$.

\section{Intrinsic operators, local charts, and further directions}\label{sec.outlook}
\setcounter{equation}{0}

\subsection{Scope of the construction}

Sections~\ref{sec.prop}--\ref{sec.examples} provide a four-step procedure:
start from a PDE on a standard reference domain $Q$, replace it by a nonlocal
approximation at scale $\eps$, transport that approximation to the fractal
domain $K$ through the IFS isomorphism, and finally discretize the lifted problem
on the level-$m$ partition of $K$. The output is an explicit self-similar IPS,
namely \eqref{IPS-linear} in the linear case and \eqref{IPS-nonlinear} in the
nonlinear/nonlocal case. Theorem~\ref{thm:two-parameter} and
Corollary~\ref{cor:balanced-scales} quantify the full approximation error.

The two-scale estimate is important because the two parameters have different
meanings. The parameter $\eps$ measures the consistency error of the nonlocal
surrogate on $Q$, whereas the level $m$ (or equivalently the number of particles
$N_m=k^m$) measures the resolution of the self-similar network on $K$.
Once the order of consistency $r(\eps)$ is known, the estimate prescribes how the
nonlocalization scale and the network depth should be coupled. For the model
problems treated here, one finds $\beta=1$ for transport and Burgers in the smooth
regime, and $\beta=2$ for heat. In particular, diffusion allows a larger
nonlocalization scale $\eps$ than first-order dynamics for the same network size
and target accuracy.

The operator obtained on $K$ is transported through the symbolic coding, not
intrinsic in the sense of fractal analysis. This is both a limitation and a
feature. It is a limitation if the objective is to recover an operator canonically
attached to the geometry of $K$, such as a Kigami-type Laplacian. It is a feature
if the objective is to model dynamics on hierarchical networks whose natural
continuum description is inherited from a reference PDE posed on a simpler space
$Q$. The present construction should therefore be viewed as complementary to the
intrinsic theory of PDEs on fractals, not as a replacement for it.

There is nevertheless a useful interface with the Dirichlet-form framework. If a lifted
kernel is symmetric and nonnegative, then it defines a quadratic nonlocal energy of the
form
$$
  \cE_\eps(u,u)=\frac12\iint_{K\times K}(u(x)-u(y))^2J_\eps(x,y)\,d\nu(x) \, d\nu(y),
$$
and, under the standard closability and regularity assumptions, a symmetric Dirichlet
form. In that case the calculus of \cite{CipSau03, HinTep13} applies at fixed
$\eps$. The intrinsic question is then a limiting one: after a suitable
renormalization, do the forms $\cE_\eps$ converge, in Mosco or $\Gamma$ sense, to a
local resistance form such as the Kigami energy? Related nonlocal-to-local limits on
Sierpi\'nski-type fractals were studied in \cite{GriYan19,Yang18}. This question is
orthogonal to the two-scale Galerkin estimate proved here, but it provides a natural
bridge between transported nonlocal models and intrinsic fractal diffusion.

\subsection{Nonlinear problems and boundary conditions}

Several ingredients still deserve a systematic treatment before one can claim a
fully general theory. The first is a genuinely nonlinear version of
Theorem~\ref{thm:two-parameter}. For semilinear or quasilinear problems, the
linear semigroup estimate should be replaced by a uniform well-posedness and
Lipschitz-stability statement for the nonlocal flows on bounded subsets of a
regularity space $X$, together with a consistency estimate for the nonlinear
operator on a core. The Burgers example strongly suggests that such an extension
is feasible in the smooth regime, but it should be stated as an abstract theorem
rather than left at the level of representative examples.

A second point is the treatment of boundary conditions on $[0,1]$. The periodic
case is the cleanest one because the kernels are translation invariant and no
boundary layer appears. On a bounded interval, the nonlocal surrogate must encode
the boundary condition through its interaction with the exterior of the domain. A
natural guiding principle is to fix an extension operator $\cE_B$ associated with
the boundary condition $B$, apply the whole-space nonlocal operator to $\cE_B u$,
and then restrict the result back to $[0,1]$.

For homogeneous Dirichlet diffusion, one may take the zero extension $\cE_Du$ and
define
$$
  (\bA_\eps u)(x):=\frac{2\kappa}{m_2\eps^2}\int_{\R}\rho_\eps(x-y)\big(\cE_Du(y)-u(x)\big)\,dy,
$$
that is,
$$
  (\bA_\eps u)(x)=\frac{2\kappa}{m_2\eps^2}\int_0^1 \rho_\eps(x-y)\big(u(y)-u(x)\big)\,dy
  -\frac{2\kappa}{m_2\eps^2}u(x)\int_{\R\setminus[0,1]}\rho_\eps(x-y)\,dy.
$$
This preserves dissipativity and is the natural nonlocal counterpart of the
Dirichlet heat operator. For homogeneous Neumann diffusion, one replaces zero
extension by an even reflection $\cE_Nu$ at the endpoints and sets
$$
  (\bA_\eps u)(x):=\frac{2\kappa}{m_2\eps^2}\int_{\R}\rho_\eps(x-y)\big(\cE_Nu(y)-u(x)\big)\,dy,
$$
which preserves constants and is consistent with $\partial_xu=0$ at the
boundary. For first-order problems, the appropriate mechanism is typically
one-sided: for instance, if $b>0$ and one prescribes an inflow datum at $x=0$,
one should use a kernel supported in $[0,\infty)$ together with an exterior
extension carrying the inflow value on $(-\infty,0)$. This yields a
boundary-adapted nonlocal derivative that approximates $b\partial_xu$ while
enforcing the inflow condition. What remains to be proved in all these cases is
the analogue of Assumption~\ref{ass:Aeps-consistency}: one must establish uniform
semigroup stability (or nonlinear stability) and control the boundary layer
created by the loss of translation invariance near $x=0,1$.

A third point concerns genuinely hyperbolic regimes after shock formation. For
inviscid Burgers, the centered nonlocal approximation is adequate only before
shocks or after viscous regularization. To recover entropy solutions directly,
one needs either a vanishing-viscosity limit or a monotone one-sided nonlocal
flux. Thus entropy admissibility is not an automatic consequence of the present
framework, but an additional structural requirement on the approximation.

\subsection{Local charts on the Sierpinski gasket}

The main geometric gap between the transported framework and intrinsic operators on
fractals comes from adjacency. The global coding of the cylinders $Q_w$ by
subintervals of $[0,1]$ imposes a total order, whereas the cells $K_w$ in a
fractal such as the Sierpinski gasket have a different adjacency graph in
Euclidean space. A possible refinement is therefore to use the interval only
\emph{locally}, as a chart on each self-similar cell, rather than as a single
global coordinate.

For every word $w\in\Sigma_m$, the conjugacy relation
$\Phi\circ g_w=f_w\circ\Phi$ holds almost everywhere on $Q_w$. Hence one may
regard
\begin{equation}\label{local-chart}
  \Phi_w:=f_w\circ \Phi\circ g_w^{-1}:Q_w\longrightarrow K_w
\end{equation}
as a local chart from the interval cell $Q_w$ to the fractal cell $K_w$. In the
case of the Sierpinski gasket, the three first-generation cells $K_1,K_2,K_3$
already form a natural atlas, since their overlaps reduce to boundary points and
therefore have $\nu$-measure zero. For $L^2$-based constructions, this means that
one can work chartwise on each cell and then impose coupling conditions only on
the lower-dimensional interfaces.

More concretely, let $T_w$ denote the local isometry between
$L^2(K_w,\nu|_{K_w})$ and $L^2(Q_w,\lambda|_{Q_w})$ induced by \eqref{local-chart}.
Given a nonlocal approximation $\bA_\eps$ on $Q$, one may define on each cell a
local transported operator
$$
  \tilde\bA_{\eps,w}:=T_w^{-1}\,\bA_\eps\,T_w.
$$
A patchwise diffusion operator can then be assembled from the family
$(\tilde\bA_{\eps,w})_{|w|=m}$ together with compatibility conditions across the
vertices shared by neighboring cells. At least heuristically, this chartwise
construction is closer in spirit to manifold atlases or finite-element assembly
than the global coding used in the main body of the paper, and it may provide a
bridge between the present transported viewpoint and intrinsic operators such as
the Kigami Laplacian \cite{Kig01,Str06}. Making this idea precise would require a
careful analysis of the rescaling, of the matching conditions at common
vertices, and of the limit $m\to\infty$.

A more intrinsic version of this idea would replace symbolic charts by harmonic
coordinates, which are adapted to energy rather than to measure. On the Sierpi\'nski
gasket, harmonic coordinates and the associated Kusuoka-type energy measures provide a
coordinate system closer to the resistance geometry \cite{Kigami08, Teplyaev08}. This is
also consistent with recent works \cite{MedMiz24, MedMiz25} on harmonic maps from p.c.f.
fractals to the circle and on the Kuramoto model on graphs approximating the Sierpi\'nski
gasket, where homotopy classes, harmonic extension, Dirichlet energy and
$\Gamma$-convergence play a central role. These works suggest
that an intrinsic variant of the present paper should probably be formulated
in terms of energy-adapted local charts and prefractal adjacency, rather than through a
single global symbolic coordinate.

\subsection{Pullback of nonlocal equations on the fractal}

The same isomorphism can be used in the reverse direction. Suppose that one starts
from a nonlocal evolution equation on the fractal:
\begin{equation}\label{fractal-nonlocal}
  \partial_t u(t,x)=\int_K J(x,y)\bigl(u(t,y)-u(t,x)\bigr)\,d\nu(y),
\end{equation}
where $J\in L^2(K\times K,\nu\times\nu)$ (or, more generally, satisfies suitable
Schur-type bounds). Define the pullback kernel
\begin{equation}\label{pullback-kernel}
  J^\Phi(\xi,\eta):=
  \begin{cases}
    J\bigl(\Phi(\xi),\Phi(\eta)\bigr), & (\xi,\eta)\in Q_0\times Q_0,\\
    0, & \text{otherwise.}
  \end{cases}
\end{equation}
Then the dynamics on $K$ is conjugate to a nonlocal equation on $Q$.
The following statement then follows immediately from the measure-preserving 
property of $\Phi$ and the definitions of $T$ and $J^\Phi$.

\begin{proposition}\label{prop:pullback-nonlocal}
Let $\cL_K$ be the operator defined by the right-hand side of
\eqref{fractal-nonlocal}, and let $\cL_Q$ be the operator on $L^2(Q,\lambda)$
defined by
$$
  (\cL_Q v)(\xi)=\int_Q J^\Phi(\xi,\eta)\bigl(v(\eta)-v(\xi)\bigr)\,d\lambda(\eta).
$$
Then $T\,\cL_K\,T^{-1}=\cL_Q$.
Consequently, $u$ solves \eqref{fractal-nonlocal} on $K$ if and only if
$v:=Tu$ solves $\partial_t v = \cL_Q v$ on $Q$.
\end{proposition}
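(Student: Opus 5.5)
The plan is a direct change of variables through the measure-preserving bijection $\Phi:Q_0\to K_0$ of Theorem~\ref{thm.isomorph}, using the pointwise descriptions $(Tf)(\xi)=f(\Phi(\xi))$ for $\lambda$-a.e.\ $\xi$ and $(T^{-1}v)(x)=v(\Phi^{-1}(x))$ for $\nu$-a.e.\ $x$. I will first settle the functional-analytic setting. Under the standing hypothesis $J\in L^2(K\times K,\nu\times\nu)$, or more generally a Schur-type bound, the function $d_J(x)=\int_K J(x,y)\,d\nu(y)$ is finite a.e. Then $\cL_K u = \int_K J(\cdot,y)u(y)\,d\nu(y)-d_J\,u$ is defined on its natural domain in $L^2(K,\nu)$. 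The same construction gives $\cL_Q$ on $L^2(Q,\lambda)$, with $d_{J^\Phi}$ in place of $d_J$.

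The first step is to show that the product map $\Phi\times\Phi:Q_0\times Q_0\to K_0\times K_0$ is a measure-preserving bijection from $\lambda\times\lambda$ to $\nu\times\nu$. It suffices to check this on rectangles $A\times B$, where it follows from the measure-preserving property of $\Phi$, and then to extend by the $\pi$-$\lambda$ theorem. Both complements are null, since $Q_0$ and $K_0$ have full measure. Hence $J^\Phi$ defined in \eqref{pullback-kernel} is the image of $J$ under the induced isometry on kernels, and $\|J^\Phi\|_{L^2}=\|J\|_{L^2}$. The Schur bounds transfer in the same way.

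The second step is the core computation. Take $v\in L^2(Q,\lambda)$ and set $u=T^{-1}v$, so that $u(y)=v(\Phi^{-1}(y))$ $\nu$-a.e. For $\nu$-a.e.\ $x=\Phi(\xi)\in K_0$, I will change variables $y=\Phi(\eta)$ in the integral $\int_K J(x,y)(u(y)-u(x))\,d\nu(y)$, which is legitimate because $\Phi_*\lambda=\nu$. This gives $\int_{Q_0}J(\Phi(\xi),\Phi(\eta))(v(\eta)-v(\xi))\,d\lambda(\eta)=(\cL_Q v)(\xi)$. Applying $T$, which composes with $\Phi$, yields $T\cL_K T^{-1}v=\cL_Q v$ $\lambda$-a.e. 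The same identity for $d_J$ shows that the domains correspond: $\cD(\cL_Q)=T\,\cD(\cL_K)$.

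The only delicate point, and the step I expect to be the main obstacle, is the null-set bookkeeping. The identity $u(\Phi(\eta))=v(\eta)$ holds only for a.e.\ $\eta$, and the outer variable must be restricted to a full-measure set. I will handle this with Fubini on $Q\times Q$: the set of pairs $(\xi,\eta)$ where the integrands differ is $\lambda\times\lambda$-null, so for a.e.\ $\xi$ its $\eta$-section is null. Finally, the equivalence of the evolutions follows because $T$ is a time-independent isometric isomorphism. If $u\in C^1([0,T];L^2)$ (or $u$ is a mild solution) of \eqref{fractal-nonlocal}, then $v=Tu$ satisfies $\partial_t v=T\partial_t u=T\cL_K u=\cL_Q Tu=\cL_Q v$. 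The converse follows by applying $T^{-1}$. When $\cL_K$ is bounded, the semigroups satisfy $e^{t\cL_Q}=Te^{t\cL_K}T^{-1}$, which is the same conjugation used for $\tilde S(t)$ in Section~\ref{sec.prop}.
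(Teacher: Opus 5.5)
Your proposal is correct and takes the same route as the paper. The paper gives no separate proof; it only says the identity follows immediately from the measure-preserving property of $\Phi$ and the definitions of $T$ and $J^\Phi$, and your change of variables $y=\Phi(\eta)$, the measure preservation of $\Phi\times\Phi$, the Fubini null-set bookkeeping, the domain correspondence via $d_{J^\Phi}=d_J\circ\Phi$, and the transfer of solutions through the time-independent isometry $T$ spell out exactly that argument.
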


This simple observation suggests a complementary numerical program. Instead of
transporting a classical PDE from $Q$ to $K$, one may start from a native
nonlocal equation on the fractal and pull it back to the interval, where one can
apply one-dimensional quadrature, sparse approximation, or Monte--Carlo methods.
A particularly attractive case is the fractional heat equation
$\partial_t u + (-\Delta_K)^s u = 0$, for $0<s<1$,
on the Sierpinski gasket, where $\Delta_K$ denotes an intrinsic Laplacian on the
fractal and $(-\Delta_K)^s$ is defined through spectral calculus. Whenever such
an operator admits an integral representation (either directly as a singular kernel
or through a fractional Riesz kernel) its pullback through \eqref{pullback-kernel}
produces a singular one-dimensional nonlocal operator on $Q$. Recent work on
fractional Gaussian fields and fractional Riesz kernels on the Sierpinski gasket
indicates that this direction is analytically meaningful \cite{BauLac22}.
Developing numerical schemes for these pulled-back operators would naturally
complement the transport framework of the present paper.

This reverse use of the isomorphism is close in spirit to recent integral-equation
methods for acoustic scattering by fractals, where the physical model is first written
as a nonlocal integral equation on the fractal scatterer and then discretized by
Galerkin and quadrature methods \cite{CaetanoEtAl2025}. Our contribution in such a
setting would be different: the symbolic isomorphism gives a systematic way of pulling
native fractal integral equations back to a one-dimensional reference space, where
classical quadrature, sparse approximation, randomized sampling, or the self-similar
cell averages developed in this paper can be applied.

\subsection{Additional directions}
The previous subsections point to several further questions. One may compare the
transported operators obtained here with intrinsic operators on specific
fractals, study sparse or random self-similar networks in the spirit of
\cite{KVMed22}, or consider higher-dimensional reference spaces $Q$ and
anisotropic kernels. From the particle-system viewpoint, it would also be
interesting to understand systematically which qualitative properties of the
reference PDE survive after transport and Galerkin discretization: dissipation
for diffusion, conservative or one-sided structures for transport and scalar
conservation laws, maximum principles, entropy inequalities, or synchronization
thresholds in oscillator-type models. Clarifying these correspondences would
significantly strengthen the conceptual content of the framework.

Another promising direction is optimal transport on fractals. The map $\Phi$ identifies
probability measures on $K$ that are absolutely continuous with respect to $\nu$ with
probability measures on $Q$ that are absolutely continuous with respect to $\lambda$.
If the cost is defined through the symbolic coordinate, the problem is reduced to a
classical one-dimensional optimal-transport problem; if the cost is intrinsic on $K$
(for example Euclidean or resistance distance), its pullback becomes a nonlocal cost on
$Q$. This gives a flexible framework in which one can separate measure-theoretic
transport, intrinsic geometry, and numerical discretization.

\medskip
\noindent {\bf Acknowledgements.}
This work was partially supported by the National Science Foundation through
grant DMS-2406941 (to GSM).

\small
\bibliographystyle{abbrv}
\bibliography{galerkinSG,galerkinSG_additions}

\end{document}